\newtheorem{thm}{Theorem}
\newtheorem{prop}{Proposition}
\newtheorem{lem}{Lemma}
\newtheorem{remark}{Remark}
\newtheorem{cor}{Corollary}
\begin{document}

\title{Some Generalizations of Hadamard's-Type Inequalities through Differentiability for s-Convex functions and Their Applications}

\author{Muhammad Muddassar*}\label{*}
\email{malik.muddassar@gmail.com}
\author{Muhammad Iqbal Bhatti}
\email{uetzone@hotmail.com}
\address{Department of Mathematics, University of Engineering and Technology, Lahore - Pakistan}

\date{18 March, 2012.}
\subjclass[2000]{26D15, 26A51, 39A10.}
\keywords{Hermite-Hadamard type inequality, $s$-Convex function, Beta function, H\"{o}lder's Integral Inequality, Quadrature Rules, Special Means}

\begin{abstract}
In this paper, a general form of integral inequalities of Hermite - Hadamard's type through differentiability for $s$-convex function in second sense and whose all derivatives are absolutely continuous are established. The generalized integral inequalities contributes some better estimates than some already presented. The inequalities are then applied to numerical integration and some special means.
\end{abstract}

\maketitle
{\setcounter{section}{0}}
\section{Introduction}\label{sec1}
\markboth{\underline{\hspace{3.4in} Muhammad Muddassar and M. I. Bhatti}}
{\underline{\hspace{1pt}Generalization of Hadamard-Type Inequality for s-Convex functions ...\hspace{2.1in}}}\pagestyle{myheadings}

Let $f:\emptyset\neq I\subseteq\mathbb{R}\rightarrow\mathbb{R}$ be a function defined on the interval $I$ of real numbers. Then $f$ is called convex if
\begin{equation*}
    f(t\,x+(1-t)\,y)\leq\,\,t\,f(x)\,+\,\,(1-t)\,f(y)
\end{equation*}
for all $x,y\in I$ and $t\in [0,1].$ There are many results associated with convex functions in the area of inequalities, but one of those is the classical Hermite Hadamard inequality:
\begin{equation}\label{HH}
    f\left(\frac{a+b}{2}\right)\leq\frac{1}{b-a}\int_a^b f(x) dx\leq\frac{f(a)+f(b)}{2}.
\end{equation}
This inequality gives us an estimate, from below and from above, of the average value
of the convex function $f : [a, b] \rightarrow\mathbb{R}$ for $a,b\in I,$ with $a<b.$
 \\
H. Hudzik and L. Maligranda in ~\cite{r6}, define the class of functions which are $s-$convex in the second sense. This class is defined as follows:\\ A function $f:\mathbb{R}^+\rightarrow\mathbb{R}$ is said to be $s-$convex in the second sense if for any two non-negative real numbers $x, y$, for $\lambda\in[0,1]$ and adjust $s\in(0,1]$, we have the following inequality
\begin{equation}\label{e1}
    f(\lambda\,x+(1-\lambda)\,y)\leq\,\,\lambda^s\,f(x)\,+\,\,(1-\lambda)^s\,f(y)
\end{equation}
It is seen that from (\ref{e1}), for $s=1$, $s$-convex function is convex. H. Hudzik et al. in the same paper \cite{r6}  talked about some results associating with $s-$convex functions in second sense. We find some more new results about Hadamard's inequality for $s-$convex functions in ~\cite{r4,r10,r8,r9}. Although it is seen that many important inequalities connecting with 1-convex (convex) functions given in ~\cite{r2}, but one of them is $(\ref{HH}).$\\
S. S. Dragomir et al. gave a variant of Hermite-Hadamard's inequality for $s-$convex functions in second sense in ~\cite{r5}.
\begin{thm}\label{t1}
 Let a function $f:\mathbb{R}^+\rightarrow\mathbb{R}^+$ be $s-$convex in the second sense, where $s\in(0,1],$ and $a,b\in\mathbb{R}^+$, with $a<b$. If $f\in L^{1}[a,b]$, then the following inequality holds
\begin{equation}\label{e2}
    2^{s-1}f\left(\frac{a+b}{2}\right)\leq\frac{1}{b-a}\int_a^b\,f(x)\,dx\leq\frac{f(a)+f(b)}{s+1}
\end{equation}
\end{thm}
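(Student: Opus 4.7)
The plan is to prove the two inequalities in (\ref{e2}) separately, in both cases by substituting $x=ta+(1-t)b$ (so $dx=-(b-a)\,dt$) to convert the integral on $[a,b]$ into the dimensionless form $\int_0^1 f(ta+(1-t)b)\,dt$, and then applying the defining inequality (\ref{e1}) for $s$-convexity in the second sense.

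For the right-hand inequality, I would apply (\ref{e1}) directly to $f(ta+(1-t)b)$ with $\lambda=t$, $x=a$, $y=b$, obtaining the pointwise bound $f(ta+(1-t)b)\le t^{s}f(a)+(1-t)^{s}f(b)$. Integrating this inequality over $t\in[0,1]$ and using $\int_0^1 t^{s}\,dt=\int_0^1 (1-t)^{s}\,dt=\frac{1}{s+1}$ yields the upper bound $\frac{f(a)+f(b)}{s+1}$.

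For the left-hand inequality, the idea is to write $\frac{a+b}{2}$ as the midpoint of the symmetric pair $u=ta+(1-t)b$ and $v=(1-t)a+tb$, so that applying (\ref{e1}) with $\lambda=\tfrac12$, $x=u$, $y=v$ gives
\begin{equation*}
 2^{s}f\!\left(\frac{a+b}{2}\right)\le f\bigl(ta+(1-t)b\bigr)+f\bigl((1-t)a+tb\bigr).
\end{equation*}
Integrating over $t\in[0,1]$, the two terms on the right each reduce, after the substitution above, to $\frac{1}{b-a}\int_a^b f(x)\,dx$ (the second one via the additional change $t\mapsto 1-t$), so the right-hand side becomes $\frac{2}{b-a}\int_a^b f(x)\,dx$. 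Dividing by $2$ produces exactly $2^{s-1}f(\tfrac{a+b}{2})\le \frac{1}{b-a}\int_a^b f(x)\,dx$.

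There is essentially no obstacle here; the only step requiring a bit of attention is the symmetrization trick in the lower bound, namely choosing the pair $\{ta+(1-t)b,\,(1-t)a+tb\}$ whose midpoint is independent of $t$, so that a single application of (\ref{e1}) at $\lambda=\tfrac12$ combined with the symmetry of the resulting integrals under $t\mapsto 1-t$ delivers the factor $2^{s-1}$ rather than the classical $1$ from the $s=1$ case. Integrability of $f$ on $[a,b]$ guarantees that all the integrals written above exist and the interchange of integration with the pointwise inequality is legitimate.
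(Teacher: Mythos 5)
Your argument is correct: the pointwise bound $f(ta+(1-t)b)\le t^{s}f(a)+(1-t)^{s}f(b)$ integrated over $[0,1]$ gives the right-hand side, and the symmetrization $u=ta+(1-t)b$, $v=(1-t)a+tb$ with $\lambda=\tfrac12$ gives the factor $2^{s-1}$ on the left. Note that the paper itself offers no proof of Theorem \ref{t1}; it is quoted from Dragomir and Fitzpatrick \cite{r5}, and your argument is precisely the standard proof given there, so there is nothing to reconcile between the two.
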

In second inequality in $(\ref{e2})$, the constant $k=\frac{1}{s+1}$ is the most suitable. In \cite{r7}, where B. Jagers gave both the right and left bound for the constant $c(s)$ in the inequality
$$c(s)\,f\left(\frac{a+b}{2}\right)\leq\,\,\frac{1}{b-a}\,\int_a^b\,f(x)\,dx$$
and improved (\ref{e2}). He proved that
$$\frac{2^{s+1}-1}{s+2}\leq\,\,c(s)\leq\,\,2^\frac{s-1}{s+1}\left(\frac{2^{s}-1}{s}\right)^\frac{s}{s+1}\leq\,\,\frac{2^{s+1}-2^{s-1}-1}{s+1}$$
In \cite{r3,r2} S. S. Dragomir et al. discussed inequalities for differentiable and twice differentiable functions connecting with the H-H Inequality on the basis of the Lemma \ref{l2}  and Lemma \ref{l1}.\\
S. S. Dragomir et al. in ~\cite[pp. 65]{r2} generalized the lemma for $n$-time differentiable mapping and he states in this way:
\begin{lem}\label{l3}
Let $f: I \subset \mathbb{R} \rightarrow \mathbb{R}$ be $n$ times differentiable function on $I^o$ with $f^{(n)} \in \mathbf{L}^1 [a, b]$, then
\begin{eqnarray}\label{le3}
    &&\!\!\!\!\!\!\!\!\!\!\!\!\nonumber(-1)^n \int_{a}^b f(x)dx = \sum_{m=1}^n (-1)^{n-m+1}\left[ \frac{(t-a)^m-(t-b)^m}{m!}\right]f^{(m-1)}(t)
   \\&&\!\!\!\!\indent\indent\indent\indent\indent\indent+\frac{1}{n!}\left[ \int_{a}^t (x-a)^n f^{(n)}(x)dx + \int_{t}^b (x-b)^n f^{(n)}(x)dx \right]
\end{eqnarray}
\end{lem}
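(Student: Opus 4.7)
The plan is to prove the identity by iterating an integration-by-parts recurrence (equivalently, by induction on $n$). To make the bookkeeping transparent, I would introduce the shorthand
\[
R_n(t) := \int_a^t (x-a)^n f^{(n)}(x)\,dx + \int_t^b (x-b)^n f^{(n)}(x)\,dx,
\]
so that the claim to be proved reads
\[
(-1)^n\int_a^b f(x)\,dx \;=\; \sum_{m=1}^n (-1)^{n-m+1}\frac{(t-a)^m-(t-b)^m}{m!}\,f^{(m-1)}(t) \;+\; \frac{R_n(t)}{n!}.
\]
The base case $n=0$ is immediate, since $R_0(t)=\int_a^b f(x)\,dx$ and the sum is empty.

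The crucial step is an integration by parts applied separately to each of the two integrals defining $R_n(t)$, taking $u=(x-a)^n$ on $[a,t]$ and $u=(x-b)^n$ on $[t,b]$, with $dv=f^{(n)}(x)\,dx$ in each. The boundary contributions at $x=a$ and $x=b$ vanish because the respective $u$ do, while the evaluations at $x=t$ combine to give $[(t-a)^n-(t-b)^n]f^{(n-1)}(t)$. The remaining integrals reassemble into $R_{n-1}(t)$ multiplied by $-n$, yielding the first-order recurrence
\[
R_n(t) \;=\; \bigl[(t-a)^n-(t-b)^n\bigr]\,f^{(n-1)}(t) \;-\; n\,R_{n-1}(t).
\]

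To finish, I would unwind this recurrence as an alternating telescoping sum. Dividing by $n!$ and writing $J_n := R_n(t)/n!$ and $A_n := [(t-a)^n-(t-b)^n]f^{(n-1)}(t)/n!$, the recurrence becomes $J_n + J_{n-1} = A_n$, whose iteration from $J_0 = \int_a^b f(x)\,dx$ produces
\[
J_n \;=\; \sum_{m=1}^n (-1)^{n-m} A_m \;+\; (-1)^n \int_a^b f(x)\,dx.
\]
Solving for $(-1)^n\int_a^b f(x)\,dx$ and using $(-1)^{n-m} = -(-1)^{n-m+1}$ to flip the sign of the sum yields exactly the stated identity. I do not anticipate a conceptual obstacle: the absolute continuity of the intermediate derivatives (implicit in the paper's hypotheses, as stressed in the abstract) justifies each integration by parts, so the only real care needed is bookkeeping of the alternating signs and the factorials $m!$ when unrolling the recurrence, where an off-by-one slip in the exponent of $(-1)$ or an error in promoting $n$ to $n!$ would break the match with the claimed form.
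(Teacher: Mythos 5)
Your argument is correct: the integration-by-parts recurrence $R_n(t)=[(t-a)^n-(t-b)^n]f^{(n-1)}(t)-nR_{n-1}(t)$ is exactly right (the boundary terms at $x=a$ and $x=b$ do vanish for $n\geq 1$), and unwinding $J_n+J_{n-1}=A_n$ from $J_0=\int_a^b f(x)\,dx$ reproduces the stated identity with the correct signs, as one can confirm for $n=1,2$. Note that the paper itself offers no proof of this lemma --- it is quoted from Dragomir and Pearce \cite{r2} --- so there is nothing internal to compare against; your induction-via-integration-by-parts is the standard route by which such identities are established in that source, and your remark that the absolute continuity of $f^{(n-1)}$ (so that $f^{(n)}\in L^1[a,b]$ integrates back correctly) is what licenses each integration by parts is the right point of care.
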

Here we explore Lemma\ref{l1} and \ref{l2} by different approach and then make their use for investigation for some more results which generalize the results explored by S. Hussain et. al. in ~\cite{r1}.\\
Using $n=1$ and $t=\frac{a+b}{2}$ in (\ref{le3}); we get lemma \ref{l2}.
\begin{lem}\label{l2}
Let $f:I\subseteq\mathbb{R}\rightarrow\mathbb{R}$ be differentiable function on $I^\circ,$ $a,b\in I^\circ$ with $a<b$ and $f'\in L^{1}[a,b],$ then
\begin{eqnarray*}
  f\left(\frac{a+b}{2}\right)-\frac{1}{b-a}\int_a^b\,f(x)\,dx&=& \frac{(b-a)}{4}\int_0^1\,(1-t)\,\left[f'\left(ta+(1-t)\frac{a+b}{2}\right)\right.\\&& -\left.f'\left(tb+(1-t)\frac{a+b}{2}\right)\right]\,dt
\end{eqnarray*}
\end{lem}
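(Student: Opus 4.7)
The plan is to evaluate the right-hand side directly by integration by parts on each of the two integrals, combine them, and recognize the midpoint value and the average of $f$ on $[a,b]$.

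First I would split the right-hand side as $\frac{b-a}{4}(I_1 - I_2)$ where $I_1$ is the integral involving $f'\!\left(ta+(1-t)\frac{a+b}{2}\right)$ and $I_2$ is the integral involving $f'\!\left(tb+(1-t)\frac{a+b}{2}\right)$. For $I_1$, I would observe that the $t$-derivative of $f\!\left(ta+(1-t)\frac{a+b}{2}\right)$ equals $\frac{a-b}{2}f'\!\left(ta+(1-t)\frac{a+b}{2}\right)$, so that the factor $f'(\cdot)$ in the integrand can be replaced by $\frac{2}{a-b}\frac{d}{dt}f(\cdot)$. Then an integration by parts with $u=1-t$ and $dv = \frac{d}{dt}f(\cdot)\,dt$ gives a boundary term contributing $-f\!\left(\frac{a+b}{2}\right)$ and a residual integral $\int_0^1 f\!\left(ta+(1-t)\frac{a+b}{2}\right)dt$, which after the linear substitution $x=ta+(1-t)\frac{a+b}{2}$ becomes $\frac{2}{b-a}\int_a^{(a+b)/2} f(x)\,dx$.

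Next I would do the parallel computation for $I_2$, where the analogous derivative identity gives a factor $\frac{2}{b-a}$ instead of $\frac{2}{a-b}$, and the substitution produces $\frac{2}{b-a}\int_{(a+b)/2}^b f(x)\,dx$. Adding the contributions with the correct signs, the two midpoint boundary terms reinforce to give $\frac{4}{b-a}f\!\left(\frac{a+b}{2}\right)$, while the two half-interval integrals combine into $\frac{4}{(b-a)^2}\int_a^b f(x)\,dx$ with a minus sign. Multiplying by $\frac{b-a}{4}$ then yields precisely $f\!\left(\frac{a+b}{2}\right) - \frac{1}{b-a}\int_a^b f(x)\,dx$, which is the left-hand side.

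The only place where care is needed is keeping track of signs through the substitutions, since the first substitution reverses the interval of integration (from $[(a+b)/2, a]$ to $[a,(a+b)/2]$) while the second does not; this is the main bookkeeping obstacle, but not a conceptual one. Alternatively, I could simply cite Lemma \ref{l3} with $n=1$ and $t=\frac{a+b}{2}$, since after substituting these values the general identity collapses immediately to the stated formula, with the bracket $[(t-a)^m-(t-b)^m]/m!$ evaluated at $m=1$ providing the factor $\frac{b-a}{2}f\!\left(\frac{a+b}{2}\right)$ and the two residual integrals being rewritten via the change of variable $x \mapsto t\,a+(1-t)\frac{a+b}{2}$ or $x\mapsto t\,b+(1-t)\frac{a+b}{2}$ on $[a,(a+b)/2]$ and $[(a+b)/2,b]$ respectively.
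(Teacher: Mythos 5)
Your proposal is correct. Your primary argument --- splitting the right-hand side into the two integrals $I_1$, $I_2$, converting $f'$ into a $t$-derivative via $\frac{d}{dt}f\left(ta+(1-t)\frac{a+b}{2}\right)=\frac{a-b}{2}f'\left(ta+(1-t)\frac{a+b}{2}\right)$, integrating by parts to pick up the boundary value $f\left(\frac{a+b}{2}\right)$, and changing variables to recover $\frac{2}{b-a}\int_a^{(a+b)/2}f$ and $\frac{2}{b-a}\int_{(a+b)/2}^{b}f$ --- checks out, including the sign bookkeeping caused by the orientation-reversing substitution in $I_1$. The paper does none of this computation: its entire proof of the lemma is the one-line remark that it is the case $n=1$, $t=\frac{a+b}{2}$ of the general identity of Lemma \ref{l3}, which is exactly your fallback alternative. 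So your main route is the more elementary and self-contained one (it establishes the identity from scratch, with no appeal to the $n$-fold identity), while the paper's route, and your alternative, buy brevity and fit the paper's scheme of specializing Lemma \ref{l3}. One small slip in your sketch of that alternative: with $t=\frac{a+b}{2}$ and $m=1$ the bracket $\left[(t-a)-(t-b)\right]/1!$ equals $b-a$ (each of the two terms contributes $\frac{b-a}{2}$), so it yields $(b-a)f\left(\frac{a+b}{2}\right)$, which becomes the stated $f\left(\frac{a+b}{2}\right)$ only after the whole identity is divided by $b-a$; this is harmless and does not affect the validity of either argument.
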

For $n=2$ we get from identity (\ref{le3});
\begin{eqnarray}\label{le3a}
&&\!\!\!\!\!\!\!\!\!\!\!\!\!\!\!\!\!\!\!\!\!\!\!\!\!\!\!\nonumber\frac{1}{b-a}\int_a^b f(x)dx=f(t)+\frac{(b+a)-2t}{2}f'(t)\\&& \indent\indent\indent+\frac{1}{2(b-a)}\left[\int_a^t(x-a)^2f''(x)dx+\int_t^b(x-a)^2f''dx\right]
\end{eqnarray}
For $t=a$ the above equation (\ref{le3a}) becomes;
\begin{equation}\label{e3}
\frac{1}{b-a}\int_a^b f(x)dx=f(a)+\frac{(b-a)}{2}f'(a)+\frac{1}{2(b-a)} \int_a^b(x-a)^2f''dx
\end{equation}
$t=b$ the above equation (\ref{le3a}) becomes;
\begin{equation}\label{e4}
\frac{1}{b-a}\int_a^b f(x)dx=f(b)+\frac{(b-a)}{2}f'(b)+\frac{1}{2(b-a)}\int_a^b(x-a)^2f''(x)dx
\end{equation}
By adding (\ref{e3}) and (\ref{e4}), we have
\begin{eqnarray}\label{e5}
&&\!\!\!\!\!\!\!\!\!\!\!\!\!\!\nonumber\frac{f(a)+f(b)}{2}- \frac{1}{b-a}\int_a^b f(x)dx=\frac{(b-a)}{4}\left[f'(b)- f'(a)\right] \\&&\indent\indent\indent\indent\indent -\frac{1}{4(b-a)}\int_a^b\left[(x-a)^2 + (x-b)^2\right]f''(x)dx
\end{eqnarray}
Now for $x=ta + (1-t)b$ in (\ref{e5}). We get lemma \ref{l1}
\begin{lem}\label{l1}
Let $f:I\subseteq\mathbb{R}\rightarrow\mathbb{R}$ be twice differentiable function on $I^\circ$ with $f''\in L^{1}[a,b],$ then
\begin{equation*}
    \frac{f(a)+f(b)}{2}-\frac{1}{b-a}\int_a^b\,f(x)\,dx=\frac{(b-a)^2}{2}\int_0^1\,t(1-t)\,f''(ta+(1-t)b)\,\,dt
\end{equation*}
\end{lem}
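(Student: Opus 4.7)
The plan is to follow the route telegraphed by the paper: take identity (\ref{e5}) as the starting point and convert its right-hand side into the advertised single integral via the affine substitution $x = ta + (1-t)b$, $t \in [0,1]$.

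First, under this substitution one has $dx = -(b-a)\,dt$ with the endpoints flipping ($x=a$ corresponds to $t=1$, $x=b$ to $t=0$), and a direct computation gives $x-a = (1-t)(b-a)$ and $x-b = -t(b-a)$, so
$$(x-a)^2 + (x-b)^2 = \left[(1-t)^2 + t^2\right](b-a)^2.$$
Applying the change of variables therefore turns the integral term in (\ref{e5}) into
$$-\frac{1}{4(b-a)}\int_a^b \left[(x-a)^2+(x-b)^2\right]f''(x)\,dx = -\frac{(b-a)^2}{4}\int_0^1 \left[(1-t)^2+t^2\right] f''(ta+(1-t)b)\,dt.$$

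Next, I would rewrite the boundary term $\frac{b-a}{4}\left[f'(b)-f'(a)\right]$ via the fundamental theorem of calculus as $\frac{b-a}{4}\int_a^b f''(x)\,dx$ and then apply the same substitution to obtain $\frac{(b-a)^2}{4}\int_0^1 f''(ta+(1-t)b)\,dt$. Adding these two pieces, the right-hand side of (\ref{e5}) reduces to
$$\frac{(b-a)^2}{4}\int_0^1 \left[1 - (1-t)^2 - t^2\right] f''(ta+(1-t)b)\,dt,$$
and the elementary identity $1 - (1-t)^2 - t^2 = 2t(1-t)$ collapses the bracket into the desired kernel, yielding exactly $\frac{(b-a)^2}{2}\int_0^1 t(1-t)\,f''(ta+(1-t)b)\,dt$.

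No serious obstacle is expected, since the argument is mechanical once (\ref{e5}) is in hand; the only places that demand some care are tracking the sign coming from the orientation-reversing change of variables and recognizing that the quadratic weight $(1-t)^2+t^2$ from the integral term and the constant weight $1$ from the boundary term are engineered to combine into the Hermite--Hadamard kernel $2t(1-t)$. As a sanity check, one could alternatively verify the identity directly by two integrations by parts applied to $\int_0^1 t(1-t)\,g''(t)\,dt$, where $g(t) := f(ta+(1-t)b)$; both boundary contributions vanish at $t=0$ and $t=1$ because of the factor $t(1-t)$, and one recovers the same closed form without invoking Lemma \ref{l3}.
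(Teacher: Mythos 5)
Your proposal is correct and follows essentially the same route as the paper: the paper obtains Lemma \ref{l1} from identity (\ref{e5}) via the substitution $x=ta+(1-t)b$, and you simply carry out in detail the steps it leaves implicit (writing $f'(b)-f'(a)=\int_a^b f''(x)\,dx$ and combining the weights into $1-(1-t)^2-t^2=2t(1-t)$). The only minor slip is in your optional sanity-check aside: after the second integration by parts of $\int_0^1 t(1-t)g''(t)\,dt$ the boundary term $(1-2t)g(t)$ does \emph{not} vanish --- it is precisely what produces $f(a)+f(b)$ --- though this does not affect your main argument.
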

In ~\cite{r1} S. Hussain et al. use the above lemmas and give some new improvements in the right classical Hermite Hadamard inequality.
We give here definition of Beta function of Euler type which will be helpful in our next discussion, which is for $x,y>0$ defined as
$$\beta(x,y)= \frac{\Gamma(x).\Gamma(y)}{\Gamma(x+y)} = \int_0^1\,t^{x-1}\,(1-t)^{y-1}\,\,dt$$
In this paper, after this Introduction, in section \ref{Sec 2} we generalize the results for some $s-$Hermite Hadamard type inequalities for $n$-differentiable functions discussed in ~\cite{r1}. In section \ref{Sec 3} we give applications of the results from section \ref{Sec 2} for quadrature rules and in section \ref{Sec 4} we will discuss application for some special means.
\section{Inequalities For $n$-differentiable functions}\label{Sec 2}
\begin{thm}\label{t2}
Let $f: I \subset [0, \infty) \rightarrow \mathbb{R}$ be $n$-times differentiable function on $I^\circ$ such that $f^{(n)} \in L^1[a,b]$, where $a, b \in I$, $a < b$. If $|f^{(n)}|$ is s-convex on $[a,b]$ for some fixed $s \in (0, 1]$, then for every $\lambda \in [a,b]$, we have
\begin{eqnarray}\label{te2}
&&\!\!\!\!\!\!\!\!\!\!\nonumber\left|(-1)^n \int_{a}^b f(x)dx + \sum_{m=1}^n (-1)^{n-m+2}\left[\frac{(\lambda-a)^m-(\lambda-b)^m}{m!}\right] f^{(m-1)}(\lambda)\right| \\&& \indent\indent
\nonumber\leq \frac{1}{n!}\left[\beta(s+1, n+1)\left((\lambda-a)^{n+1}|f^{(n)}(a)|+(b-\lambda)^{n+1}|f^{(n)}(b)|\right)\right.\\&&\indent\indent\indent\indent\indent \left.+\beta(1, n+s+1)\left((\lambda-a)^{n+1}+(b-\lambda)^{n+1}\right)|f^{(n)}(\lambda)|\right].
\end{eqnarray}
\end{thm}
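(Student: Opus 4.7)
The plan is to specialize the identity of Lemma \ref{l3} at $t=\lambda$, rewrite it with the polynomial-in-$\lambda$ terms on the left, and then estimate the two remaining integrals by a substitution followed by the $s$-convexity hypothesis. Setting $t=\lambda$ in (\ref{le3}) and moving the sum to the left-hand side, while using the identity $-(-1)^{n-m+1}=(-1)^{n-m+2}$, produces
\begin{align*}
&(-1)^n\int_a^b f(x)\,dx+\sum_{m=1}^n (-1)^{n-m+2}\left[\frac{(\lambda-a)^m-(\lambda-b)^m}{m!}\right]f^{(m-1)}(\lambda) \\
&\qquad =\frac{1}{n!}\left[\int_a^\lambda (x-a)^n f^{(n)}(x)\,dx+\int_\lambda^b (x-b)^n f^{(n)}(x)\,dx\right].
\end{align*}
Applying the triangle inequality to the right side, and noting that $|(x-b)^n|=(b-x)^n$ on $[\lambda,b]$ regardless of the parity of $n$, reduces the task to bounding the two nonnegative integrals $\int_a^\lambda (x-a)^n|f^{(n)}(x)|\,dx$ and $\int_\lambda^b (b-x)^n|f^{(n)}(x)|\,dx$.

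For the first integral I substitute $x=ta+(1-t)\lambda$ with $t\in[0,1]$, so that $x-a=(1-t)(\lambda-a)$ and $dx=(\lambda-a)\,dt$, which pulls the factor $(\lambda-a)^{n+1}$ outside. Analogously, for the second integral I use $x=tb+(1-t)\lambda$, giving $b-x=(1-t)(b-\lambda)$ and producing the factor $(b-\lambda)^{n+1}$. In both cases what remains is of the shape $\int_0^1(1-t)^n|f^{(n)}(tu+(1-t)\lambda)|\,dt$ with $u\in\{a,b\}$, and the $s$-convexity of $|f^{(n)}|$ from (\ref{e1}) yields $|f^{(n)}(tu+(1-t)\lambda)|\leq t^s|f^{(n)}(u)|+(1-t)^s|f^{(n)}(\lambda)|$.

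Plugging this bound in and splitting the resulting integrals, the two pieces are $\int_0^1 t^s(1-t)^n\,dt=\beta(s+1,n+1)$ and $\int_0^1(1-t)^{n+s}\,dt=\beta(1,n+s+1)$, the latter appearing twice (once for $u=a$, once for $u=b$). Summing the contributions from both substitutions reproduces exactly the right-hand side of (\ref{te2}).

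The main obstacle is essentially bookkeeping rather than analysis: one must keep careful track of the signs when transposing the summation across the equality in Lemma \ref{l3}, and must observe that taking the absolute value inside the second integrand removes any dependence on the parity of $n$, so both integrals produce symmetric Beta-function contributions. No tool beyond Lemma \ref{l3}, the definition of $s$-convexity in (\ref{e1}), and the integral representation of $\beta$ is required.
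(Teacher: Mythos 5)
Your proposal is correct and follows essentially the same route as the paper: specialize Lemma \ref{l3} at $t=\lambda$, apply the triangle inequality, substitute $x=ta+(1-t)\lambda$ and $x=tb+(1-t)\lambda$ to extract the factors $(\lambda-a)^{n+1}$ and $(b-\lambda)^{n+1}$, then use $s$-convexity of $|f^{(n)}|$ together with $\int_0^1 t^s(1-t)^n\,dt=\beta(s+1,n+1)$ and $\int_0^1(1-t)^{n+s}\,dt=\beta(1,n+s+1)$. (Only a trivial bookkeeping slip: in the first substitution $dx=(a-\lambda)\,dt$ with reversed limits, which gives the same positive factor you state.)
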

\begin{proof}
From Lemma \ref{l3}, we have
\begin{eqnarray}\label{e7}
    &&\!\!\!\!\!\!\!\!\!\!\!\!\!\!\!\!\!\!\nonumber\left|(-1)^n \int_{a}^b f(x)dx + \sum_{i=1}^n (-1)^{n-m+2}\left[ \frac{(t-a)^m-(t-b)^m}{m!}\right]f^{(m-1)}(t)\right|\\&&
    \nonumber\indent\indent\leq \frac{1}{n!}\left[\left|\int_{a}^t (x-a)^n f^{(n)}(x)\right|dx + \int_{t}^b \left|(x-b)^n f^{(n)}(x)dx \right|\right]\\&&
    \nonumber \indent\indent\indent\leq \frac{1}{n!}\left[(\lambda-a)^{n+1} \int_{0}^1 (1-t)^n |f^{(n)}(ta+(1-t)\lambda)|dt \right.\\&& \indent\indent\indent\indent\left. + (b-\lambda)^{n+1}\int_{0}^1 (1-t)^n |f^{(n)}(tb+(1-t)\lambda)|dt \right]
\end{eqnarray}
Since $|f^{(n)}|$ is $s$- convex on $[a,b]$ for $t \in [0,1]$, so\\
$$|f^{(n)}(ta + (1-t)\lambda)| \leq t^s |f^{(n)}(a)|+ (1-t)^s|f^{(n)}(\lambda)|$$
Inequality (\ref{e7}) becomes
\begin{eqnarray}\label{e8}
    &&\!\!\!\!\!\!\!\!\!\!\!\!\!\!\!\!\nonumber\left|(-1)^n \int_{a}^b f(x)dx + \sum_{i=1}^n (-1)^{n-m+2}\left[ \frac{(t-a)^m-(t-b)^m}{m!}\right]f^{(m-1)}(t)\right|\\&&
    \nonumber\indent\leq \frac{1}{n!}\left[(\lambda-a)^{n+1} \int_{0}^1 (1-t)^n \left[t^s |f^{(n)}(a)|+(1-t)^s|f^{(n)}(\lambda)|\right]dt\right.\\&& \indent\indent + \left.(b-\lambda)^{n+1}\int_{0}^1 (1-t)^n \left[t^s|f^{(n)}(b)|+(1-t)^s|f^{(n)}(\lambda)|\right]dt \right]
\end{eqnarray}
Where,
\begin{equation}\label{e9}
    \int_0^1 t^s(1-t)^n dt = \beta(s+1, n+1)
\end{equation}
\begin{equation}\label{e10}
    \int_0^1 (1-t)^{n+s} dt = \beta(1, n+s+1)
\end{equation}
By combining (\ref{e8}), (\ref{e9}) and (\ref{e10}) we get (\ref{te2}).
\end{proof}
\begin{thm}\label{t3}
Let $f: I \subset [0, \infty) \rightarrow \mathbb{R}$ be $n$-times differentiable function on $I^\circ$ such that $f^{(n)} \in L^1[a,b]$, where $a, b \in I$, $a < b$. If $|f^{(n)}|^q$ is s-convex on $[a,b]$ for some fixed $s \in (0, 1]$, and $q \geq 1$. Then for every $\lambda \in [a,b]$,
\begin{eqnarray}\label{te3}
&&\!\!\!\!\!\!\!\!\!\!\!\!\nonumber\left|(-1)^n \int_{a}^b f(x)dx + \sum_{m=1}^n (-1)^{n-m+2}\left[\frac{(\lambda-a)^m-(\lambda-b)^m}{m!}\right] f^{(m-1)}(\lambda)\right| \\&&
\nonumber \leq \frac{(n+1)^{-\frac{1}{p}}}{n!}\left[(\lambda-a)^{n+1} \left\{\beta(s+1,n+1)|f^{(n)}(a)|^q + \beta(1,n+s+1)|f^{(n)}(\lambda)|^q\right\}^{\frac{1}{q}} \right.\\&& \left.
+ (b-\lambda)^{n+1}\left\{\beta(s+1, n+1)|f^{(n)}(b)|^q + \beta(1,n+s+1)|f^{(n)}(\lambda)|^q\right\}^{\frac{1}{q}}\right]
\end{eqnarray}
\end{thm}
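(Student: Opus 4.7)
The plan is to start from the same intermediate estimate that was derived in the proof of Theorem \ref{t2}. Specifically, by taking $t=\lambda$ in Lemma \ref{l3}, moving the sum to the left side (which accounts for the sign change $(-1)^{n-m+1} \to (-1)^{n-m+2}$), applying the triangle inequality to the two remainder integrals, and then performing the substitutions $x = ta + (1-t)\lambda$ on $[a,\lambda]$ and $x = tb + (1-t)\lambda$ on $[\lambda,b]$, one arrives at
\begin{equation*}
\left| \cdots \right| \leq \frac{1}{n!}\left[(\lambda-a)^{n+1}\!\int_{0}^{1}\!(1-t)^n |f^{(n)}(ta+(1-t)\lambda)|\,dt + (b-\lambda)^{n+1}\!\int_{0}^{1}\!(1-t)^n |f^{(n)}(tb+(1-t)\lambda)|\,dt\right].
\end{equation*}
This is exactly the estimate that appears mid-way through the proof of Theorem \ref{t2}; I would simply cite it rather than redo the substitutions.

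Next I would apply the power-mean inequality (equivalently, H\"older's inequality with conjugate exponents $p,q$ satisfying $\frac{1}{p}+\frac{1}{q}=1$) to each of the two integrals on the right, splitting the weight as $(1-t)^n = (1-t)^{n/p}(1-t)^{n/q}$. This gives, for the first integral,
\begin{equation*}
\int_{0}^{1}(1-t)^n |f^{(n)}(ta+(1-t)\lambda)|\,dt \leq \left(\int_{0}^{1}(1-t)^n dt\right)^{\!1/p}\!\left(\int_{0}^{1}(1-t)^n |f^{(n)}(ta+(1-t)\lambda)|^q dt\right)^{\!1/q}.
\end{equation*}
The first factor evaluates to $(n+1)^{-1/p}$, producing the constant in front of the brackets in \eqref{te3}. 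An identical manipulation applies to the second integral.

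For the remaining $L^q$-type factors, I would invoke the $s$-convexity of $|f^{(n)}|^q$, which yields the pointwise bound $|f^{(n)}(ta+(1-t)\lambda)|^q \leq t^s |f^{(n)}(a)|^q + (1-t)^s |f^{(n)}(\lambda)|^q$ (and the analogous bound with $b$ in place of $a$ for the second piece). Integrating and using the Beta-function identities \eqref{e9} and \eqref{e10} already established in the proof of Theorem \ref{t2} converts the two remaining integrals into $\beta(s+1,n+1)|f^{(n)}(a)|^q + \beta(1,n+s+1)|f^{(n)}(\lambda)|^q$ and its $b$-counterpart. Assembling these estimates produces the right-hand side of \eqref{te3} exactly.

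There is no real obstacle here; the argument is the standard power-mean strengthening of Theorem \ref{t2}. The only point requiring mild care is the bookkeeping of exponents in the H\"older split, ensuring that the resulting constant is $(n+1)^{-1/p}$ (equivalently $(n+1)^{-(1-1/q)}$) rather than the naive $(n+1)^{-1}$ that would appear from integrating $(1-t)^n$ directly. As a sanity check, the case $q=1$ gives $1/p = 0$, and the inequality collapses back to \eqref{te2}, consistent with Theorem \ref{t2}.
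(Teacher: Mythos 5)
Your proposal is correct and follows essentially the same route as the paper: the same intermediate bound from Lemma \ref{l3} with the substitutions $x=ta+(1-t)\lambda$ and $x=tb+(1-t)\lambda$, the same H\"older split $(1-t)^n=(1-t)^{n/p}(1-t)^{n/q}$ yielding the factor $(n+1)^{-1/p}$, and the same use of $s$-convexity of $|f^{(n)}|^q$ together with the Beta-function identities \eqref{e9} and \eqref{e10}. Your $q=1$ consistency check with Theorem \ref{t2} is a nice addition, and your bookkeeping of the exponents is in fact cleaner than the paper's (which drops some powers of $q$ in intermediate lines as typos).
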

\begin{proof}
From Lemma\ref{l3} we have
\begin{eqnarray}\label{e11}
    &&\nonumber\!\!\left|(-1)^n \int_{a}^b f(x)dx + \sum_{m=1}^n (-1)^{n-m+2}\left[ \frac{(\lambda-a)^m-(\lambda-b)^m}{m!}\right]f^{(m-1)}(\lambda)\right|\\&&
   \indent\indent\indent\indent\indent \nonumber\leq \frac{1}{n!}\left[(\lambda-a)^{n+1}\int_{0}^1 (1-t)^n |f^{(n)} (ta+(1-t)\lambda)|dt\right. \\
    &&\indent\indent\indent\indent\indent\indent\indent \left. + (b-\lambda)^{n+1}\int_{0}^1 (1-t)^n |f^{(n)} (tb+(1-t)\lambda)|dt\right]
\end{eqnarray}
From here, let's start with first integral of right side of (\ref{e11}) and using H\"{o}lder's Integral Inequality
\begin{eqnarray*}
    &&\!\!\!\! \int_{0}^1 (1-t)^n |f^{(n)} (ta+(1-t)\lambda)|dt \\&& \indent\indent\indent\indent= \int_{0}^1 (1-t)^{n(1-\frac{1}{q})}(1-t)^{n(\frac{1}{q})}|f^{(n)}(ta+(1-t)\lambda)|dt\\&& \indent\indent\indent\indent\indent\indent \leq \left(\int_{0}^1 (1-t)^n dt\right)^{\frac{1}{p}}\left(\int_{0}^1 (1-t)^n\left|f^{(n)}(ta+(1-t)\lambda)\right|^q dt\right)^{\frac{1}{q}}
\end{eqnarray*}
where $p=\frac{q}{q-1}$.\\
Now here $\int_{0}^1 (1-t)^n dt=\frac{1}{n+1}$
and above inequality becomes
\begin{eqnarray}\label{e12}
    &&\!\!\!\!\nonumber\int_{0}^1 (1-t)^n |f^{(n)} (ta+(1-t)\lambda)|dt \\&& \indent\indent\indent\indent \leq \left(\frac{1}{n+1}\right)^{\frac{1}{p}}\left(\int_{0}^1 (1-t)^n\left|f^{(n)}(ta+(1-t)\lambda)\right|^q dt\right)^{\frac{1}{q}}
\end{eqnarray}
Since $|f^{(n)}|^q$ is $s$- convex on $[a,b]$ for $t \in [0,1]$, so\\
$$|f^{(n)}(ta + (1-t)\lambda)| \leq t^s |f^{(n)}(a)|^q+ (1-t)^s|f^{(n)}(\lambda)|^q$$
now using equation (\ref{e9}) and (\ref{e10}) in (\ref{e12}), we have
\begin{eqnarray}\label{e13}
    &&\!\!\!\!\!\!\!\!\!\!\!\!\nonumber\int_{0}^1 (1-t)^n \left|f^{(n)} (ta+(1-t)\lambda)\right|dt \\&& \leq \left(\frac{1}{n+1}\right)^{\frac{1}{p}}\left[\beta(s+1, n+1)\left|f^{(n)}(a)\right|+\beta(1, n+s+1)\left|f^{(n)}(\lambda)\right|\right]
\end{eqnarray}
Similarly, we have
\begin{eqnarray}\label{e14}
    &&\!\!\!\!\!\!\!\!\nonumber\int_{0}^1 (1-t)^n \left|f^{(n)} (tb+(1-t)\lambda)\right|dt \\&& \leq \left(\frac{1}{n+1}\right)^{\frac{1}{p}}\left[\beta(s+1, n+1)\left|f^{(n)}(b)\right|+\beta(1, n+s+1)\left|f^{(n)}(\lambda)\right|\right]
\end{eqnarray}
Using (\ref{e13}) and (\ref{e14}) in (\ref{e11}), we get (\ref{te3}).
\end{proof}
\begin{remark}
For $n=1$, $\lambda=\frac{a+b}{2}$ in (\ref{te3}), we get theorem $4$ of ~\cite{r1}.
\end{remark}
\begin{thm}\label{t4}
Let $f: I \subset [0, \infty) \rightarrow \mathbb{R}$ be $n$-times differentiable function on $I^\circ$ such that $f^{(n)} \in L^1[a,b]$, where $a, b \in I$, $a < b$. If $|f^{(n)}|^q$ is concave on $[a,b]$ for conjugate numbers $p, q$ where $q \geq 1$. Then for every $\lambda \in [a,b]$,
\begin{eqnarray}\label{te4}
&&\!\!\!\!\!\!\!\!\!\!\!\!\!\!\!\!\!\!\!\!\!\!\!\!\nonumber\left|(-1)^n \int_{a}^b f(x)dx + \sum_{m=1}^n (-1)^{n-m+2}\left[\frac{(\lambda-a)^m-(\lambda-b)^m}{m!}\right] f^{(m-1)}(\lambda)\right| \\&&
\nonumber\indent\indent\indent\indent\indent\indent\indent\indent \leq \frac{(np+1)^{\frac{-1}{p}}}{n!}\left[\left(\lambda-a\right)^{n+1}\left|f^{(n)}\left(\frac{a+\lambda}{2}\right)\right| \right.\\&& \indent\indent\indent\indent\indent\indent\indent\indent\indent\indent\indent\indent \left.+\left(b-\lambda\right)^{n+1}\left|f^{(n)}\left(\frac{b+\lambda}{2}\right)\right|\right]
\end{eqnarray}
\end{thm}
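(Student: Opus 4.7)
The plan is to mimic the proof of Theorem~\ref{t3}, starting from the identity in Lemma~\ref{l3}: after taking absolute values and performing the substitutions $x = ta + (1-t)\lambda$ on $[a,\lambda]$ and $x = tb + (1-t)\lambda$ on $[\lambda,b]$, I obtain exactly the two integrals
\[
(\lambda-a)^{n+1}\int_0^1 (1-t)^n \bigl|f^{(n)}(ta+(1-t)\lambda)\bigr|\,dt,\qquad (b-\lambda)^{n+1}\int_0^1 (1-t)^n \bigl|f^{(n)}(tb+(1-t)\lambda)\bigr|\,dt,
\]
divided by $n!$, which is the same starting inequality~(\ref{e11}) used in Theorem~\ref{t3}.

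Next I would apply H\"older's inequality with exponents $p,q$ (conjugate, $q\ge 1$) to each of these integrals, splitting $(1-t)^n \cdot 1$ instead of distributing the weight as in Theorem~\ref{t3}. Concretely,
\[
\int_0^1 (1-t)^n \bigl|f^{(n)}(ta+(1-t)\lambda)\bigr|\,dt \leq \left(\int_0^1 (1-t)^{np}\,dt\right)^{1/p}\!\left(\int_0^1 \bigl|f^{(n)}(ta+(1-t)\lambda)\bigr|^q\,dt\right)^{1/q}.
\]
The first factor evaluates in closed form to $(np+1)^{-1/p}$, producing the constant $\frac{(np+1)^{-1/p}}{n!}$ that appears in~(\ref{te4}).

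The decisive step is bounding the remaining $L^q$ integral. Since $|f^{(n)}|^q$ is concave on $[a,b]$, I would apply Jensen's inequality in its integral form: with the change of variable $u = ta+(1-t)\lambda$,
\[
\int_0^1 \bigl|f^{(n)}(ta+(1-t)\lambda)\bigr|^q dt = \frac{1}{\lambda-a}\int_a^{\lambda}\bigl|f^{(n)}(u)\bigr|^q du \leq \left|f^{(n)}\!\left(\frac{1}{\lambda-a}\int_a^\lambda u\,du\right)\right|^q = \left|f^{(n)}\!\left(\tfrac{a+\lambda}{2}\right)\right|^q,
\]
so taking $q$-th roots gives $\bigl|f^{(n)}((a+\lambda)/2)\bigr|$. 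The parallel computation with $\lambda$ and $b$ yields $\bigl|f^{(n)}((b+\lambda)/2)\bigr|$.

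Combining these two estimates back into the bound from Lemma~\ref{l3} gives~(\ref{te4}) directly. The main obstacle is simply remembering that concavity of $|f^{(n)}|^q$ inverts the direction of Jensen's inequality (upper bound by the value at the average) and then being careful with the change of variables so the ``average'' comes out to the midpoints $(a+\lambda)/2$ and $(b+\lambda)/2$; after that, only routine constants remain.
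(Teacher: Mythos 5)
Your proposal is correct and follows essentially the same route as the paper: the bound from Lemma~\ref{l3}, H\"older's inequality with the split $(1-t)^n\cdot 1$ giving the factor $(np+1)^{-1/p}$, and then Jensen's inequality via the concavity of $|f^{(n)}|^q$ to land on the midpoints $\frac{a+\lambda}{2}$ and $\frac{b+\lambda}{2}$. The only cosmetic difference is that you apply Jensen after the change of variable back to $[a,\lambda]$, while the paper applies it directly to the integral over $[0,1]$ with the uniform weight; the two are equivalent.
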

\begin{proof}
From Lemma \ref{l3}, we have
\begin{eqnarray}\label{e14a}
    &&\nonumber\!\!\left|(-1)^n \int_{a}^b f(x)dx + \sum_{m=1}^n (-1)^{n-m+2}\left[ \frac{(\lambda-a)^m-(\lambda-b)^m}{m!}\right]f^{(m-1)}(\lambda)\right|\\&&
   \indent\indent\indent\indent\indent \nonumber\leq \frac{1}{n!}\left[(\lambda-a)^{n+1}\int_{0}^1 (1-t)^n |f^{(n)} (ta+(1-t)\lambda)|dt\right. \\
    &&\indent\indent\indent\indent\indent\indent\indent \left. + (b-\lambda)^{n+1}\int_{0}^1 (1-t)^n |f^{(n)} (tb+(1-t)\lambda)|dt\right]
\end{eqnarray}
From here, let's start with first integral of right side of (\ref{e14a}) and using H\"{o}lder's Integral Inequality
\begin{eqnarray}\label{e14b}
    &&\!\!\!\!\nonumber \int_{0}^1 (1-t)^n |f^{(n)} (ta+(1-t)\lambda)|dt \\&& \nonumber\indent\indent\indent\indent \leq \left(\int_{0}^1 (1-t)^{np} dt\right)^{\frac{1}{p}}\left(\int_{0}^1 \left|f^{(n)}(ta+(1-t)\lambda)\right|^q dt\right)^{\frac{1}{q}}\\&& \indent\indent\indent\indent = \left(\frac{1}{np+1}\right)^{\frac{1}{p}}\left(\int_{0}^1 \left|f^{(n)}(ta+(1-t)\lambda)\right|^q dt\right)^{\frac{1}{q}}
\end{eqnarray}
Now using the concavity of $\left|f^{(n)}\right|^q$ on $[a, b]$. And by applying the Jensen's Integral Inequality on the second integral in the left side of the inequality (\ref{e14b}), we have
\begin{eqnarray}\label{e14c}
&&\!\!\!\!\!\!\!\!\!\!\!\!\!\!\!\!\!\!\!\!\!\nonumber \int_{0}^1 \left|f^{(n)}(ta+(1-t)\lambda)\right|^q dt \leq \left(\int_{0}^1 t^0 dt \right) \left|f^{(n)} \left( \frac{\int_{0}^1 \left(ta+(1-t)\lambda\right)dt}{\int_{0}^1 t^0 dt} \right)\right|^q \\&& \indent\indent\indent\indent\indent\indent\indent\indent\indent = \left|f^{(n)} \left(\frac{a+\lambda}{2}\right)\right|^q
\end{eqnarray}
So inequality (\ref{e14b}) can be written in this way
\begin{eqnarray}\label{e14d}
     \int_{0}^1 (1-t)^n |f^{(n)} (ta+(1-t)\lambda)|dt  \leq \left(\frac{1}{np+1}\right)^{\frac{1}{p}}\left|f^{(n)} \left(\frac{a+\lambda}{2}\right)\right|
\end{eqnarray}
Similarly we have
\begin{eqnarray}\label{e14e}
     \int_{0}^1 (1-t)^n |f^{(n)} (tb+(1-t)\lambda)|dt  \leq \left(\frac{1}{np+1}\right)^{\frac{1}{p}}\left|f^{(n)} \left(\frac{b+\lambda}{2}\right)\right|
\end{eqnarray}
Using (\ref{e14d}) and (\ref{e14e}) in (\ref{e14a}), we get (\ref{te4}).
\end{proof}
\begin{remark}
For $n=1$ and $\lambda = \frac{a+b}{2}$ in (\ref{te4}) , we get theorem $5$ of ~\cite{r1}.
\end{remark}
\begin{thm}\label{t5}
Let $f: I \subset [0, \infty) \rightarrow \mathbb{R}$ be $n$-times differentiable function on $I^\circ$ such that $f^{(n)} \in L^1[a,b]$, where $a, b \in I$, $a < b$. If $|f^{(n)}|^q$ is $s$-convex on $[a,b]$ for some fixed $s \in (0, 1]$ and for conjugate numbers $p, q$ where $q \geq 1$ with $p=\frac{q}{q-1}$, then for every $\lambda \in [a,b]$,
\begin{eqnarray}\label{te5}
&&\!\!\!\!\!\!\!\!\!\!\!\!\!\!\!\nonumber\left|\sum_{m=1}^n (-1)^{n-m+2}\left[\frac{(\lambda-a)^m-(\lambda-b)^m}{m!}\right] f^{(m-1)}(\lambda) + (-1)^n \int_{a}^b f(x)dx\right| \\&&
\nonumber\indent\indent\indent \leq \frac{(np+1)^{\frac{-1}{p}}}{n!}\left(\!\frac{1}{s+1}\!\right)^{\frac{1}{q}}\!\left[\left(\lambda-a\right)^{n+1}\!\left(\left|f^{(n)}(a)\right|^q \!+\! \left|f^{(n)}(\lambda)\right|^q\right)^{\frac{1}{q}} \right.\\&& \indent\indent\indent\indent\indent\indent\indent\indent\indent \left.+\left(b-\lambda\right)^{n+1}\!\left(\left|f^{(n)}(b)\right|^q\! +\! \left|f^{(n)}(\lambda)\right|^q\right)^{\frac{1}{q}}\! \right]
\end{eqnarray}
\end{thm}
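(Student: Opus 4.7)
The plan is to mimic the proof structure of Theorem \ref{t4}, but replace the concavity/Jensen step with the $s$-convexity hypothesis on $|f^{(n)}|^q$. Starting from Lemma \ref{l3} and changing variables $x = ta + (1-t)\lambda$ on $[a,\lambda]$ and $x = tb + (1-t)\lambda$ on $[\lambda,b]$ (exactly as done in the proofs of Theorems \ref{t2}--\ref{t4}), I would first reduce the left-hand side of (\ref{te5}) to
\[
\frac{1}{n!}\Big[(\lambda-a)^{n+1}\!\!\int_0^1 (1-t)^n |f^{(n)}(ta+(1-t)\lambda)|\,dt + (b-\lambda)^{n+1}\!\!\int_0^1 (1-t)^n |f^{(n)}(tb+(1-t)\lambda)|\,dt\Big].
\]

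Next I would apply H\"older's inequality to each of these two integrals with conjugate exponents $p,q$, splitting the factor $(1-t)^n$ entirely onto the $p$-side (as in the proof of Theorem \ref{t4}, not as in Theorem \ref{t3}). This yields
\[
\int_0^1 (1-t)^n |f^{(n)}(ta+(1-t)\lambda)|\,dt \leq (np+1)^{-1/p}\Big(\int_0^1 |f^{(n)}(ta+(1-t)\lambda)|^q\,dt\Big)^{1/q},
\]
since $\int_0^1 (1-t)^{np}\,dt = 1/(np+1)$, and the analogous estimate for the second integral with $b$ in place of $a$.

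Now, instead of invoking concavity and Jensen's inequality (which was the key move in Theorem \ref{t4}), I would use the $s$-convexity of $|f^{(n)}|^q$:
\[
|f^{(n)}(ta+(1-t)\lambda)|^q \leq t^s |f^{(n)}(a)|^q + (1-t)^s |f^{(n)}(\lambda)|^q,
\]
and integrate termwise, using $\int_0^1 t^s\,dt = \int_0^1 (1-t)^s\,dt = 1/(s+1)$, to obtain
\[
\int_0^1 |f^{(n)}(ta+(1-t)\lambda)|^q\,dt \leq \tfrac{1}{s+1}\big(|f^{(n)}(a)|^q + |f^{(n)}(\lambda)|^q\big),
\]
and similarly with $b$ in place of $a$. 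Substituting back through the H\"older step and the reduction from Lemma \ref{l3} produces the desired bound (\ref{te5}), with the factor $(np+1)^{-1/p}/n!$ coming from H\"older and $(1/(s+1))^{1/q}$ emerging from the $s$-convexity integration.

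There is no real obstacle here: every ingredient has already been used in the preceding theorems. The only care required is bookkeeping of the constants and making sure H\"older is applied with the split $(1-t)^n \cdot 1$ rather than $(1-t)^{n/p}\cdot(1-t)^{n/q}$, so that the $(np+1)^{-1/p}$ factor (rather than $(n+1)^{-1/p}$ as in Theorem \ref{t3}) appears; this is what distinguishes the conclusion from that of Theorem \ref{t3}.
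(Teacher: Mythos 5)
Your proposal matches the paper's own proof of Theorem \ref{t5} essentially step for step: the reduction via Lemma \ref{l3}, H\"older's inequality with the whole factor $(1-t)^n$ placed on the $p$-side to produce $(np+1)^{-1/p}$, and then termwise integration of the $s$-convexity bound to obtain the factor $\left(\frac{1}{s+1}\right)^{1/q}$. The argument is correct and no further comment is needed.
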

\begin{proof}
From Lemma (\ref{l3}) we have
\begin{eqnarray}\label{e15}
    &&\nonumber\!\!\left|(-1)^n \int_{a}^b f(x)dx + \sum_{m=1}^n (-1)^{n-m+2}\left[ \frac{(\lambda-a)^m-(\lambda-b)^m}{m!}\right]f^{(m-1)}(\lambda)\right|\\&&
    \nonumber\indent\indent\indent \nonumber\leq \frac{1}{n!}\left\{\int_{a}^t \left|(x-a)^n f^{(n)} (x)\right|dx + \int_{t}^b \left|(x-t)^n f^{(n)} (x)\right|dx\right\}\\&&
   \indent\indent\indent\indent\indent \nonumber = \frac{1}{n!}\left[(\lambda-a)^{n+1}\int_{0}^1 (1-t)^n |f^{(n)} (ta+(1-t)\lambda)|dt\right. \\
    &&\indent\indent\indent\indent\indent\indent\indent \left. + (b-\lambda)^{n+1}\int_{0}^1 (1-t)^n |f^{(n)} (tb+(1-t)\lambda)|dt\right]
\end{eqnarray}
From here, let's start with first integral of right side of (\ref{e15}) and using H\"{o}lder's Integral Inequality
\begin{eqnarray}\label{e16}
    &&\!\!\!\!\nonumber \int_{0}^1 (1-t)^n |f^{(n)} (ta+(1-t)\lambda)|dt \\&& \indent\indent\indent\indent \leq \left(\int_{0}^1 (1-t)^{np} dt\right)^{\frac{1}{p}}\left(\int_{0}^1 \left|f^{(n)}(ta+(1-t)\lambda)\right|^q dt\right)^{\frac{1}{q}}
\end{eqnarray}
Here $\int_{0}^1 (1-t)^{np} dt=\frac{1}{np+1}$\\
 Now using the $s$-convexity of $\left|f^{(n)}\right|^q$ on $[a, b]$ in the second integral on the left side of the inequality (\ref{e16}), we have
\begin{eqnarray}\label{e17}
&&\!\!\!\!\!\!\!\!\!\!\!\!\!\!\!\!\!\!\!\!\!\nonumber \int_{0}^1 \left|f^{(n)}(ta+(1-t)\lambda)\right|^q dt \leq \int_{0}^1 \left(t^s \left|f^{(n)}(a)\right|^q + (1-t)^s \left|f^{(n)}(\lambda)\right|^q \right)dt \\&& \indent\indent\indent\indent\indent\indent\indent\indent\indent \leq \frac{1}{s+1} \left(\left|f^{(n)}(a)\right|^q + \left|f^{(n)}(\lambda)\right|^q \right)
\end{eqnarray}
The inequality in (\ref{e16}) becomes
\begin{eqnarray}\label{e18}
     \int_{0}^1 (1-t)^n |f^{(n)} (ta+(1-t)\lambda)|dt  \leq \left(\frac{1}{np+1}\right)^{\frac{1}{p}}\left(\frac{\left|f^{(n)}(a)\right|^q + \left|f^{(n)}(\lambda)\right|^q}{s+1}\right)^{\frac{1}{q}}
\end{eqnarray}
In similar way we can prove
\begin{eqnarray}\label{e19}
     \int_{0}^1 (1-t)^n |f^{(n)} (tb+(1-t)\lambda)|dt \leq \left(\frac{1}{np+1}\right)^{\frac{1}{p}}\left(\frac{\left|f^{(n)}(b)\right|^q + \left|f^{(n)}(\lambda)\right|^q}{s+1}\right)^{\frac{1}{q}}
\end{eqnarray}
Using (\ref{e18}) and (\ref{e19}) in (\ref{e15}), we get (\ref{te5}).
\end{proof}
\begin{remark}
For $n=1$ and $\lambda = \frac{a+b}{2}$ in (\ref{te5}), we get theorem $6$ of ~\cite{r1}.
\end{remark}
\begin{cor}(Midpoint's type Inequality)\label{cr1}
In Theorem \ref{t5}, if we select $\lambda = \frac{a+b}{2}$ for $n=1$, we obtain
\begin{eqnarray}\label{cre1}
&&\!\!\!\!\!\!\!\!\!\nonumber\left|f\left(\frac{a+b}{2}\right)-\frac{1}{b-a}\int_a^b f(x)dx\right|\leq \frac{(b-a)^2}{4(p+1)^{\frac{1}{p}}}\!\left(\!\frac{1}{s+1}\!\right)^{\frac{1}{q}}\!\! \left[\left(\left|f'(a)\right|^q+\left|f'\left(\frac{a+b}{2}\right)\right|^q\right)^{\frac{1}{q}}\right.\\&&\indent\indent\indent\indent\indent\indent\indent\indent\indent\indent\indent\indent\indent \nonumber \left.+\left(\left|f'(b)\right|^q+\left|f'\left(\frac{a+b}{2}\right)\right|^q\right)^{\frac{1}{q}}\right] \\&&\indent\indent\indent\indent\indent\indent\indent\indent\indent\indent\indent\indent \leq\frac{(b-a)^2}{2(p+1)^{\frac{1}{p}}}\left(\frac{1}{s+1}\right)^{\frac{1}{q}} \left(\left|f'(a)\right|+\left|f'(b)\right|\right)
\end{eqnarray}
\end{cor}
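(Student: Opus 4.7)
The plan is to obtain Corollary \ref{cr1} as a direct specialization of Theorem \ref{t5}, followed by an elementary post-processing step for the sharpened second inequality. First I would set $n=1$ and $\lambda=\frac{a+b}{2}$ in (\ref{te5}). On the left, the finite sum collapses to the single $m=1$ contribution, which evaluates to $(-1)^{2}\left[(\lambda-a)-(\lambda-b)\right]f(\lambda)=(b-a)f\!\left(\frac{a+b}{2}\right)$; together with the $(-1)^{1}\int_a^b f(x)\,dx$ term, the whole absolute value becomes $(b-a)\left|f\!\left(\frac{a+b}{2}\right)-\frac{1}{b-a}\int_a^b f(x)\,dx\right|$, which is exactly the left side of (\ref{cre1}) up to the factor $(b-a)$.

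Next I would simplify the right side of (\ref{te5}). Since $\lambda-a=b-\lambda=\frac{b-a}{2}$, both $(\lambda-a)^{n+1}$ and $(b-\lambda)^{n+1}$ reduce to $\frac{(b-a)^{2}}{4}$; with $n!=1$ and $(np+1)^{-1/p}=(p+1)^{-1/p}$ the numerical prefactor becomes $(p+1)^{-1/p}$. Collecting all the constants produces the first bound of (\ref{cre1}) after absorbing the factor of $(b-a)$ extracted from the left.

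For the sharper second bound I plan to apply the elementary subadditivity $(u^{q}+v^{q})^{1/q}\le u+v$ (valid for $u,v\ge 0$ and $q\ge 1$) to each of the two $q$-th power sums, reducing the bracket to $|f'(a)|+|f'(b)|+2\left|f'\!\left(\frac{a+b}{2}\right)\right|$. I would then control the midpoint value by the $s$-convexity of $|f'|^{q}$ evaluated at $t=\frac12$, which yields $\left|f'\!\left(\frac{a+b}{2}\right)\right|^{q}\le 2^{-s}\!\left(|f'(a)|^{q}+|f'(b)|^{q}\right)$; taking $q$-th roots and combining delivers the factor-of-$2$ bound by $|f'(a)|+|f'(b)|$ stated in (\ref{cre1}).

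The computation is essentially bookkeeping once Theorem \ref{t5} is in hand, so no step is genuinely difficult. The only place where care is required is in tracking the powers of $(b-a)$ when moving from (\ref{te5}) to (\ref{cre1}), and in ordering the subadditivity and midpoint $s$-convexity steps so that the numerical constant on the final line matches $\frac{(b-a)^{2}}{2(p+1)^{1/p}}(s+1)^{-1/q}$ precisely.
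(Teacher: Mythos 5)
Your first step---specializing (\ref{te5}) to $n=1$, $\lambda=\frac{a+b}{2}$---is exactly what the paper intends (its proof says only that it is ``very similar to the above theorem''), and the collapse of the sum to $(b-a)f\left(\frac{a+b}{2}\right)$ is correct. One bookkeeping point you should not gloss over: after dividing by the extracted factor $(b-a)$, the prefactor coming from $(\lambda-a)^{n+1}=(b-\lambda)^{n+1}=\frac{(b-a)^2}{4}$ is $\frac{b-a}{4(p+1)^{1/p}}$, not $\frac{(b-a)^2}{4(p+1)^{1/p}}$; the extra power of $(b-a)$ in (\ref{cre1}) is a misprint of the paper, so your assertion that collecting the constants ``produces the first bound of (\ref{cre1})'' exactly is off by one power of $(b-a)$ and should be flagged rather than claimed.

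The genuine gap is in your argument for the second inequality. Writing $A=|f'(a)|$, $B=|f'(b)|$, $M=\left|f'\left(\frac{a+b}{2}\right)\right|$, you must show $(A^q+M^q)^{1/q}+(B^q+M^q)^{1/q}\le 2(A+B)$. Subadditivity reduces this to $2M\le A+B$, but the midpoint $s$-convexity of $|f'|^q$ gives only $M^q\le 2^{-s}(A^q+B^q)$, hence (after the $q$-th root and one more subadditivity) $2M\le 2^{1-s/q}(A+B)$, and $2^{1-s/q}\le 1$ forces $s\ge q$, which under the standing hypotheses $s\in(0,1]$, $q\ge 1$ happens only when $s=q=1$. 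Reordering the steps (inserting $M^q\le 2^{-s}(A^q+B^q)$ inside the root before subadditivity) yields the constant $(1+2^{-s})^{1/q}+2^{-s/q}$, which is $\le 2$ when $s=1$ (by concavity of $t\mapsto t^{1/q}$) but exceeds $2$ for $s$ near $0$ and $q$ near $1$. So your claim that these two steps ``deliver the factor-of-2 bound'' does not hold for general $s$ and $q$. For comparison, the paper's own justification is no stronger: it cites only the inequality $(\alpha+\beta)^r\le\alpha^r+\beta^r$ for $0\le r<1$, which likewise produces $A+B+2M$ and does not reach $2(A+B)$ without an additional midpoint estimate on $|f'|$ itself that is unavailable for $s<1$ (or $q>1$); but since your write-up asserts that the constant is obtained, the missing step is a real gap in your proposal, not mere terseness.
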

\begin{proof}
Proof is very similar to the above theorem and at the end, the second inequality is found using the following inequality $\sum_{m=1}^n \left(\alpha_m + \beta_m\right)^r \leq \sum_{m=1}^n \left(\alpha_m\right)^r + \left(\beta_m\right)^r$ for $0\leq r<1$, where $\alpha_1, \alpha_2, \alpha_3, ..., \alpha_n , \beta_1, \beta_2, \beta_3, ..., \beta_n \geq 0$.
\end{proof}
\begin{thm}\label{t6}
Let $f: I \subset [0, \infty) \rightarrow \mathbb{R}$ be $n$-times differentiable function on $I^\circ$ such that $f^{(n)} \in L^1[a,b]$, where $a, b \in I$, $a < b$. If $|f^{(n)}|^q$ is $s$-convex on $[a,b]$ for some fixed $s \in (0, 1]$ and for conjugate numbers $p, q$ where $q \geq 1$ with $p=\frac{q}{q-1}$, then
\begin{eqnarray}\label{te6}
&&\!\!\!\!\!\!\!\!\!\!\!\!\!\!\!\nonumber\left|\sum_{m=1}^n (-1)^{n-m+2} \frac{(b-a)^m}{m!}\left[f^{(m-1)}(b)- (-1)^m f^{(m-1)}(a)\right]  + 2(-1)^n \int_{a}^b f(x)dx\right| \\&&
\!\!\!\!\!\!\!\!\!\!\!\!\nonumber \leq \frac{(b-a)^{n+1}}{n!}\left(\!\!\frac{1}{n+1}\!\!\right)^{\frac{q}{q-1}}\!\!\!\left[\left\{\beta(s+1, n+1)\left|f^{(n)}(a)\right|^q \!\!\!+ \beta(1, n+s+1)\left|f^{(n)}(b)\right|^q\right\}^{\frac{1}{q}} \right.\\&& \left.\indent\indent\indent + \left\{\beta(s+1, n+1)\left|f^{(n)}(b)\right|^q+ \beta(1, n+s+1)\left|f^{(n)}(a)\right|^q\right\}^{\frac{1}{q}}  \right]
\end{eqnarray}
\end{thm}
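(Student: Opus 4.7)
The plan is to deduce Theorem \ref{t6} by specialising Lemma \ref{l3} to the two endpoints of the interval $[a,b]$. At $t=a$ the integral $\int_a^t(x-a)^n f^{(n)}(x)\,dx$ vanishes, and at $t=b$ the integral $\int_t^b(x-b)^n f^{(n)}(x)\,dx$ vanishes. Adding the two identities produced in this way, each one contributes a copy of $(-1)^n\int_a^b f(x)\,dx$, producing the factor $2(-1)^n\int_a^b f(x)\,dx$ seen in (\ref{te6}); the surviving polynomial parts combine into the stated compact expression in $f^{(m-1)}(a)$ and $f^{(m-1)}(b)$.

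Concretely, I would first carry out the sign bookkeeping in (\ref{le3}). Setting $t=a$ and using $(a-b)^m=(-1)^m(b-a)^m$ to collapse $(-1)^{n-m+1}(-(a-b)^m)$ into $(-1)^n$, the $t=a$ identity reads $(-1)^n\int_a^b f(x)\,dx = (-1)^n\sum_{m=1}^n \frac{(b-a)^m}{m!}f^{(m-1)}(a) + \frac{1}{n!}\int_a^b(x-b)^n f^{(n)}(x)\,dx$, while the $t=b$ identity reads $(-1)^n\int_a^b f(x)\,dx = \sum_{m=1}^n \frac{(-1)^{n-m+1}(b-a)^m}{m!}f^{(m-1)}(b) + \frac{1}{n!}\int_a^b(x-a)^n f^{(n)}(x)\,dx$. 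Adding these two identities, transposing the polynomial sums to the left, and rewriting the result as $\sum_{m=1}^n(-1)^{n-m+2}\frac{(b-a)^m}{m!}[f^{(m-1)}(b)-(-1)^m f^{(m-1)}(a)]$ (using $-(-1)^{n-m+1}=(-1)^{n-m+2}$ and $(-1)^{n+2}=(-1)^n$) recovers exactly the left-hand side of (\ref{te6}).

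For the right-hand side, I would take absolute values, apply the triangle inequality, and reparametrise $x=tb+(1-t)a$ in the first remaining integral and $x=ta+(1-t)b$ in the second, so each becomes a term of the form $(b-a)^{n+1}\int_0^1 (1-t)^n|f^{(n)}(\cdot)|\,dt$. From here the estimate mirrors the proof of Theorem \ref{t3}: a single H\"older step with exponents $p,q$ produces the factor $\left(\int_0^1(1-t)^n\,dt\right)^{1/p}=(n+1)^{-1/p}$, after which the $s$-convexity of $|f^{(n)}|^q$ together with the Beta identities $\int_0^1 t^s(1-t)^n\,dt=\beta(s+1,n+1)$ and $\int_0^1(1-t)^{n+s}\,dt=\beta(1,n+s+1)$ produces the two $\{\cdot\}^{1/q}$ expressions in (\ref{te6}). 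The only real obstacle is the sign and index bookkeeping in the first step; once the two endpoint identities are combined in the correct form, the analytic content is a verbatim repetition of the H\"older-plus-$s$-convexity argument already used for Theorem \ref{t3}.
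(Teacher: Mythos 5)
Your proposal is correct and takes essentially the same route as the paper: the paper's proof also specializes Lemma \ref{l3} at $\lambda=a$ and $\lambda=b$, combines the two endpoint relations (its inequalities (\ref{t6b})--(\ref{t6d})), passes to integrals over $[0,1]$, and finishes with exactly the H\"older-plus-$s$-convexity step of Theorem \ref{t3}, yielding the factor $(n+1)^{-1/p}$ and the two beta-function brackets. The only differences are cosmetic --- the paper keeps one parametrization $x=ta+(1-t)b$ with weights $(1-t)^n$ and $t^n$ and invokes $\beta(x,y)=\beta(y,x)$, whereas you reparametrize each integral separately; note also that the exponent $\left(\tfrac{1}{n+1}\right)^{\frac{q}{q-1}}$ in the printed statement of (\ref{te6}) is evidently a misprint for $\left(\tfrac{1}{n+1}\right)^{\frac{1}{p}}=\left(\tfrac{1}{n+1}\right)^{\frac{q-1}{q}}$, which is what both your argument and the paper's own estimates (\ref{t6f})--(\ref{t6g}) produce.
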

\begin{proof}
Let's start with the lemma \ref{l3}
\begin{eqnarray}\label{t6a}
    &&\nonumber\!\!\!\!\!\!\!\!\!\!\!\!\left|(-1)^n \int_{a}^b f(x)dx + \sum_{m=1}^n (-1)^{n-m+2}\left[ \frac{(\lambda-a)^m-(\lambda-b)^m}{m!}\right]f^{(m-1)}(\lambda)\right|\\&&
    \indent\indent \leq \frac{1}{n!}\left\{\int_{a}^\lambda \left|(x-a)^n f^{(n)} (x)\right|dx + \int_{\lambda}^b \left|(x-t)^n f^{(n)} (x)\right|dx\right\}
\end{eqnarray}
For $\lambda=a$ in (\ref{t6a}), we have
\begin{eqnarray}\label{t6b}
    &&\nonumber\!\!\!\!\!\!\!\!\!\!\!\!\!\!\!\!\!\!\!\!\!\!\left|(-1)^n \int_{a}^b f(x)dx + \sum_{m=1}^n (-1)^{n-m+2} \frac{(-1)^{m+1}(b-a)^m}{m!}f^{(m-1)}(a)\right|\\&&
    \indent\indent\indent\indent\indent\indent\indent\indent \leq \frac{1}{n!}\left\{\int_{a}^b \left|(x-b)^n f^{(n)}(x)\right|dx\right\}
\end{eqnarray}
For $\lambda=b$ in (\ref{t6a}), we have
\begin{eqnarray}\label{t6c}
    &&\nonumber\!\!\!\!\!\!\!\!\!\!\!\!\!\!\!\!\!\!\!\!\!\!\left|(-1)^n \int_{a}^b f(x)dx + \sum_{m=1}^n (-1)^{n-m+2} \frac{(b-a)^m}{m!}f^{(m-1)}(b)\right|\\&&
    \indent\indent\indent\indent\indent\indent\indent\indent \leq \frac{1}{n!}\left\{\int_{a}^b \left|(x-a)^n f^{(n)} (x)\right|dx \right\}
\end{eqnarray}
Now combining (\ref{t6b}) and (\ref{t6c}), we have
\begin{eqnarray}\label{t6d}
    &&\nonumber\!\!\!\!\!\!\!\!\!\!\!\!\!\!\!\!\!\!\!\!\!\!\left|2(-1)^n \int_{a}^b f(x)dx + \sum_{m=1}^n (-1)^{n-m+2} \frac{(b-a)^m}{m!}\left[f^{(m-1)}(b)-(-1)^mf^{(m-1)}(a)\right]\right|\\&&
    \indent\indent\indent\indent\indent\indent\indent\indent \leq \frac{1}{n!} \int_{a}^b \left\{\left|(x-a)^n\right|+\left|(x-b)^n\right|\right\} \left|f^{(n)}(x)\right|dx
\end{eqnarray}
we can write (\ref{t6d}) in the form
\begin{eqnarray}\label{t6e}
    &&\nonumber\!\!\!\!\!\!\!\!\!\!\!\!\!\!\!\!\!\!\!\!\!\!\left|2(-1)^n \int_{a}^b f(x)dx + \sum_{m=1}^n (-1)^{n-m+2} \frac{(b-a)^m}{m!}\left[f^{(m-1)}(b)-(-1)^mf^{(m-1)}(a)\right]\right|\\&&
    \nonumber\indent\indent\indent\indent\indent\indent \leq \frac{(b-a)^{n+1}}{n!}\left\{ \int_{0}^1 (1-t)^n \left|f^{(n)}(ta+(1-t)b)\right|dt \right.\\&& \left. \indent\indent\indent\indent\indent\indent\indent\indent\indent\indent\indent\indent + \int_{0}^1 t^n \left|f^{(n)}(ta+(1-t)b)\right|dt\right\}
\end{eqnarray}
Now here we can easily prove the following result
\begin{eqnarray}\label{t6f}
    &&\nonumber\!\!\!\!\!\!\!\!\!\!\!\!\!\!\!\!\!\! \int_{0}^1 (1-t)^n \left|f^{(n)}(ta+(1-t)b)\right|dt \leq \left(\frac{1}{n+1}\right)^{\frac{1}{p}}\left(\beta(s+1, n+1)\left|f^{(n)}(a)\right|^q \right. \\&& \left. \indent\indent\indent\indent\indent\indent\indent\indent\indent\indent\indent\indent +\beta(1, n+s+1)\left|f^{(n)}(b)\right|^q\right)^{\frac{1}{q}}
\end{eqnarray}
And furthermore, we have
\begin{eqnarray}\label{t6g}
    &&\nonumber\!\!\!\!\!\!\!\!\!\!\!\!\!\!\!\!\!\! \int_{0}^1 t^n \left|f^{(n)}(ta+(1-t)b)\right|dt \leq \left(\frac{1}{n+1}\right)^{\frac{1}{p}}\left(\beta(n+s+1, 1)\left|f^{(n)}(a)\right|^q \right. \\&& \left. \indent\indent\indent\indent\indent\indent\indent\indent\indent\indent\indent\indent +\beta(n+1, s+1)\left|f^{(n)}(b)\right|^q\right)^{\frac{1}{q}}
\end{eqnarray}
Using (\ref{t6f}) and (\ref{t6g}) in (\ref{t6e}), we get (\ref{te6}).
\end{proof}
\begin{remark}
For $n=2$ in (\ref{te6}), we get theorem $8$ of ~\cite{r1}.
\end{remark}
\begin{cor}(Trapezoidal's type Inequality)\label{cr2}
In Theorem \ref{t6}, if we select $n=2$ for $s=1$, we obtain
\begin{eqnarray}\label{cre2}
&&\!\!\!\!\!\!\!\!\!\nonumber\left|\frac{f(a)+f(b)}{2}-\frac{1}{b-a}\int_a^b f(x)dx\right|\leq \frac{(b-a)^2}{2(6)^{\frac{1}{p}}}\!\left(\!\frac{1}{12}\!\right)^{\frac{1}{q}}\!\! \left[\left|f''(a)\right|^q+\left|f''(a)\right|^q\right]^{\frac{1}{q}} \\&&\indent\indent\indent\indent\indent\indent\indent\indent\indent\indent \leq\frac{(b-a)^2}{2(6)^{\frac{1}{p}}}\!\left(\!\frac{1}{12}\!\right)^{\frac{1}{q}} \left[\left|f''(a)\right|+\left|f''(b)\right|\right]
\end{eqnarray}
\end{cor}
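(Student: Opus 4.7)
The plan is to derive Corollary \ref{cr2} by direct substitution of $n=2$ and $s=1$ into Theorem \ref{t6}, followed by the same finishing inequality used in Corollary \ref{cr1}. Since most of the work is routine specialization, the only nontrivial ingredients are the Beta function values and the final concavity-type step.

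First, I would compute the Beta constants appearing in (\ref{te6}) at $n=2$, $s=1$: $\beta(s+1,n+1)=\beta(2,3)=\tfrac{1}{12}$ and $\beta(1,n+s+1)=\beta(1,4)=\tfrac{1}{4}$. The prefactor $\tfrac{(b-a)^{n+1}}{n!}\bigl(\tfrac{1}{n+1}\bigr)^{1/p}$ becomes $\tfrac{(b-a)^{3}}{2\cdot 3^{1/p}}$. I would then factor $\tfrac{1}{12}$ out of each of the two bracketed $q$-th root expressions on the right-hand side, so that $\bigl(\tfrac{1}{12}\bigr)^{1/q}$ appears outside the brackets.

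Next, I would simplify the left-hand side of (\ref{te6}) at $n=2$. The sum $\sum_{m=1}^{2}(-1)^{n-m+2}\tfrac{(b-a)^m}{m!}[f^{(m-1)}(b)-(-1)^m f^{(m-1)}(a)]$ expands to $-(b-a)[f(a)+f(b)]+\tfrac{(b-a)^{2}}{2}[f'(b)-f'(a)]$; combined with $2\int_a^b f$ and normalized by $2(b-a)$, it is recognized (via the same manipulation that led from (\ref{e3})--(\ref{e4}) to (\ref{e5})) as the trapezoidal remainder $\bigl|\tfrac{f(a)+f(b)}{2}-\tfrac{1}{b-a}\int_a^b f\bigr|$. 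Once this identification is made, the first inequality in (\ref{cre2}) follows by collecting the prefactor $\tfrac{(b-a)^{2}}{2\cdot 6^{1/p}}\bigl(\tfrac{1}{12}\bigr)^{1/q}$.

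For the second inequality in (\ref{cre2}), I would apply the elementary bound $(\alpha+\beta)^{r}\le\alpha^{r}+\beta^{r}$ for $0\le r<1$ invoked in Corollary \ref{cr1}, with $r=1/q$, $\alpha=|f''(a)|^{q}$, $\beta=|f''(b)|^{q}$, which upgrades $[|f''(a)|^{q}+|f''(b)|^{q}]^{1/q}$ to $|f''(a)|+|f''(b)|$. The main obstacle is the left-hand side reduction: the $m=2$ slot of the sum contributes $f'$ terms that must be absorbed via the (\ref{e3})--(\ref{e4}) identity rather than by a purely automatic substitution, and the arithmetic of consolidating $3^{1/p}$ and $12^{1/q}$ into the claimed $6^{1/p}\cdot 12^{1/q}$ normalization needs to be tracked carefully.
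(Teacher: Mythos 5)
Your plan of substituting $n=2$, $s=1$ directly into (\ref{te6}) does not actually produce (\ref{cre2}), and the two places where you defer the details are exactly where it breaks. First, the two Beta constants are unequal: $\beta(s+1,n+1)=\beta(2,3)=\tfrac{1}{12}$ but $\beta(1,n+s+1)=\beta(1,4)=\tfrac{1}{4}$, so factoring $\tfrac{1}{12}$ out of the brackets leaves $\{|f''(a)|^q+3|f''(b)|^q\}^{1/q}$ and $\{|f''(b)|^q+3|f''(a)|^q\}^{1/q}$, not $[|f''(a)|^q+|f''(b)|^q]^{1/q}$; likewise your prefactor $3^{-1/p}$ cannot be ``consolidated'' into the claimed $6^{-1/p}$ --- these constants genuinely differ, and no bookkeeping fixes that. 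Second, and more seriously, the left-hand side does not reduce to the trapezoidal remainder: at $n=2$ the sum contributes $-(b-a)[f(a)+f(b)]+\tfrac{(b-a)^2}{2}[f'(b)-f'(a)]$ (as you correctly expand), and after dividing by $2(b-a)$ the quantity bounded in (\ref{te6}) is $\bigl|\tfrac{f(a)+f(b)}{2}-\tfrac{1}{b-a}\int_a^b f(x)dx-\tfrac{b-a}{4}\bigl(f'(b)-f'(a)\bigr)\bigr|$, which by the identity (\ref{e5}) equals $\tfrac{1}{4(b-a)}\bigl|\int_a^b\bigl[(x-a)^2+(x-b)^2\bigr]f''(x)dx\bigr|$. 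The manipulation leading to (\ref{e5}) does not ``absorb'' the $f'$ terms; they remain inside the absolute value, so Theorem \ref{t6} at $n=2$ bounds a different functional than the trapezoid error, and your claimed identification fails.

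The inequality (\ref{cre2}), with the constants $6^{-1/p}$ and $12^{-1/q}$, is really the $s=1$ case of the trapezoid-type estimate of \cite{r1} (compare Proposition \ref{pro2}, where the general-$s$ constant $\tfrac{1}{(s+2)(s+3)}$ appears, equal to $\tfrac{1}{12}$ at $s=1$), and the argument that yields it starts from Lemma \ref{l1} rather than from (\ref{te6}): from $\tfrac{f(a)+f(b)}{2}-\tfrac{1}{b-a}\int_a^b f(x)dx=\tfrac{(b-a)^2}{2}\int_0^1 t(1-t)f''(ta+(1-t)b)\,dt$, write $t(1-t)=\bigl(t(1-t)\bigr)^{1/p}\bigl(t(1-t)\bigr)^{1/q}$ and apply H\"older's inequality to get the factor $\bigl(\int_0^1 t(1-t)dt\bigr)^{1/p}=6^{-1/p}$; then use convexity of $|f''|^q$ together with $\int_0^1 t^2(1-t)dt=\int_0^1 t(1-t)^2dt=\tfrac{1}{12}$ to obtain the first inequality of (\ref{cre2}) (which also shows the bracket should read $|f''(a)|^q+|f''(b)|^q$, fixing the typo in the statement); finally apply $(\alpha+\beta)^{1/q}\le\alpha^{1/q}+\beta^{1/q}$ exactly as in Corollary \ref{cr1}, which is the one step of your proposal that is sound. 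The paper itself offers only the remark that the proof parallels Corollary \ref{cr1}, but the correct parallel is with this weighted-kernel computation, not with a literal substitution into Theorem \ref{t6}.
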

\begin{proof}
Proof is very similar as we did in corollary \ref{cr1}.
\end{proof}
\begin{thm}\label{t7}
Let $f: I \subset [0, \infty) \rightarrow \mathbb{R}$ be $n$-times differentiable function on $I^\circ$ such that $f^{(n)} \in L^1[a,b]$, where $a, b \in I$, $a < b$. If $|f^{(n)}|^q$ is $s$-concave on $[a,b]$ for some fixed $s \in (0, 1]$ and for conjugate numbers $p, q$ where $q \geq 1$ with $p=\frac{q}{q-1}$, then for every $\lambda \in [a,b]$,
\begin{eqnarray}\label{te7}
&&\!\!\!\!\!\!\!\!\!\!\!\!\!\!\!\nonumber\left|\sum_{m=1}^n (-1)^{n-m+2}\left[\frac{(\lambda-a)^m-(\lambda-b)^m}{m!}\right] f^{(m-1)}(\lambda) + (-1)^n \int_{a}^b f(x)dx\right| \\&&
\nonumber\indent\indent\indent\indent\indent \leq \frac{(np+1)^{\frac{-1}{p}}}{n!}.2^{\frac{s-1}{q}}\left[\left(\lambda-a\right)^{n+1}\left|f^{(n)}\left(\frac{a+\lambda}{2}\right)\right| \right.\\&&\left.\indent\indent\indent\indent\indent\indent\indent\indent\indent\indent\indent\indent+ \left(b-\lambda\right)^{n+1}\left|f^{(n)}\left(\frac{b+\lambda}{2}\right)\right|\right]
\end{eqnarray}
\end{thm}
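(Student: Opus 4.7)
The plan is to mirror the proof of Theorem \ref{t4}, replacing the Jensen-type upper bound used there (valid for concave $|f^{(n)}|^q$) with the corresponding Hermite--Hadamard upper bound available when $|f^{(n)}|^q$ is $s$-concave. Starting from the identity in Lemma \ref{l3} and performing the same change of variables $x = ta + (1-t)\lambda$ on $[a,\lambda]$ and $x = tb + (1-t)\lambda$ on $[\lambda,b]$ already carried out in (\ref{e14a}), I would reduce the left-hand side of (\ref{te7}) to
\[ \frac{1}{n!}\Bigl[(\lambda-a)^{n+1} I_1 + (b-\lambda)^{n+1} I_2 \Bigr], \]
where $I_1 = \int_0^1 (1-t)^n |f^{(n)}(ta+(1-t)\lambda)|\, dt$ and $I_2$ is the analogous integral with $b$ in place of $a$.

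Next, I would apply H\"older's inequality to $I_1$ exactly as in (\ref{e14b}), attaching the weight $(1-t)^n$ to the $L^p$-factor, which yields
\[ I_1 \leq (np+1)^{-1/p}\Bigl(\int_0^1 |f^{(n)}(ta+(1-t)\lambda)|^q\, dt\Bigr)^{1/q}. \]
Up to this point the argument is the same as in Theorem \ref{t4}; the present theorem differs in how the $L^q$-integral is controlled. A linear change of variables rewrites $\int_0^1 |f^{(n)}(ta+(1-t)\lambda)|^q\, dt$ as $\frac{1}{\lambda-a}\int_a^\lambda |f^{(n)}(u)|^q\, du$, so I would then invoke the Hermite--Hadamard inequality for $s$-concave functions (the analogue of the left half of (\ref{e2}) with the inequality reversed) to obtain
\[ \frac{1}{\lambda-a}\int_a^\lambda |f^{(n)}(u)|^q\, du \leq 2^{s-1}\Bigl|f^{(n)}\Bigl(\frac{a+\lambda}{2}\Bigr)\Bigr|^q. \]
Taking $q$-th roots produces the factor $2^{(s-1)/q}\bigl|f^{(n)}((a+\lambda)/2)\bigr|$.

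An identical argument on $[\lambda,b]$ yields the analogous estimate for $I_2$ with $(b+\lambda)/2$ in place of $(a+\lambda)/2$. Substituting both bounds into the first displayed inequality gives precisely (\ref{te7}).

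The only subtle point, and hence the main obstacle, is justifying the reversed $s$-Hermite--Hadamard inequality used in the third step. One derives it by applying Theorem \ref{t1} to $-|f^{(n)}|^q$, but care is required because Theorem \ref{t1} is stated for $\mathbb{R}^+$-valued functions; the key observation is that the lower bound $2^{s-1}f((a+b)/2) \leq \frac{1}{b-a}\int_a^b f(x)\, dx$ in (\ref{e2}) does not actually need the positivity hypothesis for its derivation from (\ref{e1}), so it reverses cleanly under $s$-concavity. Once this is in place, one must simply track the exponent $1/q$ carefully so that the constant $2^{s-1}$ turns into $2^{(s-1)/q}$ as displayed in the theorem.
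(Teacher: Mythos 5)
Your proposal is correct and follows essentially the same route as the paper: the same Lemma \ref{l3} decomposition, the same H\"older split giving $(np+1)^{-1/p}$, and the same reversed $s$-Hermite--Hadamard bound $\int_0^1 |f^{(n)}(ta+(1-t)\lambda)|^q\,dt \leq 2^{s-1}|f^{(n)}(\tfrac{a+\lambda}{2})|^q$, which the paper simply asserts ``by $s$-concavity'' as its display (\ref{e7a}). Your extra justification of that step (applying the midpoint argument behind (\ref{e2}) with the inequality reversed) only fills in detail the paper leaves implicit.
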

\begin{proof}
To prove, we carry on in similar way as we did in theorem \ref{t5}.\\
By $s$-concavity of $\left|f^{(n)}\right|^q$ we obtain
\begin{eqnarray}\label{e7a}
&&\!\!\!\!\!\!\!\!\!\!\!\!\!\!\!\!\!\!\!\!\! \int_{0}^1 \left|f^{(n)}(ta+(1-t)\lambda)\right|^q dt \leq 2^{s-1} \left|f^{(n)}\left(\frac{a+\lambda}{2}\right)\right|^q
\end{eqnarray}
In an analogous manner
\begin{eqnarray}\label{e7b}
&&\!\!\!\!\!\!\!\!\!\!\!\!\!\!\!\!\!\!\!\!\! \int_{0}^1 \left|f^{(n)}(tb+(1-t)\lambda)\right|^q dt \leq 2^{s-1} \left|f^{(n)}\left(\frac{b+\lambda}{2}\right)\right|^q
\end{eqnarray}
From (\ref{e15}), (\ref{e7a}) and (\ref{e7b}) instantly give (\ref{te7}).
\end{proof}
\begin{remark}
For $n=1$ and $\lambda= \frac{a+b}{2}$ in (\ref{te7}), we get theorem $7$ of ~\cite{r1}.
\end{remark}
\begin{thm}\label{t8}
Let $f: I \subset [0, \infty) \rightarrow \mathbb{R}$ be $n$-times differentiable function on $I^\circ$ such that $f^{(n)} \in L^1[a,b]$, where $a, b \in I$, $a < b$. If $|f^{(n)}|^q$ is concave on $[a,b]$ for conjugate numbers $p, q$ where $q \geq 1$ with $p=\frac{q}{q-1}$, then
\begin{eqnarray}\label{te8}
&&\!\!\!\!\!\!\!\!\!\!\!\nonumber\left|\sum_{m=1}^n (-1)^{n-m+2} \frac{(b-a)^m}{2m!}\left[f^{(m-1)}(b)- (-1)^m f^{(m-1)}(a)\right]  + (-1)^n \int_{a}^b f(x)dx\right| \\&&
\indent\indent\indent\indent\indent\indent\indent\nonumber \leq \frac{(b-a)^{n+1}}{n!}\left(\frac{1}{np+1}\right)^{\frac{1}{p}}\left|f^{(n)}\left(\frac{a+b}{2}\right)\right|\\&&
\indent\indent\indent\indent\indent\indent\indent\indent= \frac{(b-a)^{n+1}}{n!}\left(\beta(np+1,1)\right)^{\frac{1}{p}}\left|f^{(n)}\left(\frac{a+b}{2}\right)\right|
\end{eqnarray}
\end{thm}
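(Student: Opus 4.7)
The plan is to mirror the strategy used for Theorem \ref{t6}, but replace the $s$-convex estimate by Jensen's inequality (as in Theorem \ref{t4}) to exploit concavity of $|f^{(n)}|^q$.

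First, I would invoke Lemma \ref{l3} at $\lambda=a$ and at $\lambda=b$ separately, obtaining two exact identities (not inequalities): with $\lambda=a$ the first integral term vanishes, leaving only $\frac{1}{n!}\int_a^b (x-b)^n f^{(n)}(x)\,dx$, while with $\lambda=b$ the second integral vanishes, leaving $\frac{1}{n!}\int_a^b (x-a)^n f^{(n)}(x)\,dx$. After bookkeeping the sign $(-1)^{n-m+1}\cdot(-1)^{m+1}=(-1)^n$ in the $\lambda=a$ case, I would add the two identities and divide by $2$. The coefficients in the resulting sum collapse to $(-1)^{n-m+2}\frac{(b-a)^m}{2\,m!}\bigl[f^{(m-1)}(b)-(-1)^m f^{(m-1)}(a)\bigr]$, matching the left-hand side of (\ref{te8}) exactly, and producing the kernel $(x-a)^n+(x-b)^n$ on the right.

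Next, taking absolute values, using $|(x-b)^n|=(b-x)^n$, and making the substitution $x=ta+(1-t)b$ converts the right-hand side to
\begin{equation*}
\frac{(b-a)^{n+1}}{2\,n!}\left[\int_0^1 (1-t)^n \bigl|f^{(n)}(ta+(1-t)b)\bigr|\,dt + \int_0^1 (1-t)^n \bigl|f^{(n)}(tb+(1-t)a)\bigr|\,dt\right],
\end{equation*}
where the second integral arises by the change of variable $s=1-t$ applied to $\int_0^1 t^n|f^{(n)}(ta+(1-t)b)|\,dt$.

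To each of these two integrals I would apply H\"older's inequality with exponents $p,q$, giving the factor $\left(\int_0^1(1-t)^{np}\,dt\right)^{1/p}=(np+1)^{-1/p}$, and then apply Jensen's integral inequality using concavity of $|f^{(n)}|^q$ (exactly as in (\ref{e14c})). Since $\int_0^1 (ta+(1-t)b)\,dt = \int_0^1 (tb+(1-t)a)\,dt = \frac{a+b}{2}$, both integrals are bounded by $(np+1)^{-1/p}\left|f^{(n)}\!\left(\frac{a+b}{2}\right)\right|$. Summing the two identical bounds cancels the factor $1/2$, yielding the right-hand side of (\ref{te8}). The final equality in the statement is just the identity $\beta(np+1,1)=\frac{1}{np+1}$.

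The main obstacle is the sign bookkeeping in step one: reconciling $(-1)^{n-m+1}$ from Lemma \ref{l3} with the factor $(-1)^{m+1}(b-a)^m$ produced by $(\lambda-b)^m=(a-b)^m$ when $\lambda=a$, then showing that the combined coefficient in the averaged identity can be rewritten in the form $(-1)^{n-m+2}[f^{(m-1)}(b)-(-1)^m f^{(m-1)}(a)]$. Once this algebraic identity is verified, the analytic steps (H\"older and Jensen) are routine.
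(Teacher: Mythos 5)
Your proof is correct. The paper actually states Theorem \ref{t8} without any proof, so there is nothing to compare line by line; your argument is exactly the one the paper implicitly intends, namely the $\lambda=a$, $\lambda=b$ specializations of Lemma \ref{l3} as in the proof of Theorem \ref{t6}, followed by H\"older with exponents $p,q$ and the Jensen/concavity step used in (\ref{e14c}) of Theorem \ref{t4}; your sign bookkeeping $(-1)^{n-m+1}(-1)^{m+1}=(-1)^n$ and the identification of the averaged coefficients with the left-hand side of (\ref{te8}) check out, as does $\beta(np+1,1)=\tfrac{1}{np+1}$. If anything, your version is slightly cleaner than the paper's own treatment of the analogous Theorem \ref{t6}: you add the two exact identities before taking absolute values, rather than combining the two inequalities (\ref{t6b})--(\ref{t6c}) by the triangle inequality. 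The only caveat, inherited from the paper's hypotheses rather than introduced by you, is that the H\"older step needs $q>1$ (for $q=1$ one bounds $(1-t)^n\le 1$ directly and the stated bound holds with $(np+1)^{-1/p}$ read as $1$).
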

\begin{thm}\label{t9}
Let $f: I \subset [0, \infty) \rightarrow \mathbb{R}$ be $n$-times differentiable function on $I^\circ$ such that $f^{(n)} \in L^1[a,b]$, where $a, b \in I$, $a < b$. If $|f^{(n)}|^q$ is $s$-convex on $[a,b]$ for conjugate numbers $p, q$ where $q \geq 1$ with $p=\frac{q}{q-1}$, then
\begin{eqnarray}\label{te9}
&&\!\!\!\!\!\!\!\!\!\!\!\nonumber\left|\sum_{m=1}^n (-1)^{n-m+2} \frac{(b-a)^m}{2m!}\left[f^{(m-1)}(b)- (-1)^m f^{(m-1)}(a)\right]  + (-1)^n \int_{a}^b f(x)dx\right| \\&&
\nonumber \leq \frac{(b-a)^{n+1}}{n!}\left(\frac{1}{np+1}\right)^{\frac{1}{p}}\left(s+1\right)^{-\frac{1}{q}}\left(\left|f^{(n)}(a)\right|^q+\left|f^{(n)}(b)\right|^q\right)^{\frac{1}{q}}
\\&&
\indent= \frac{(b-a)^{n+1}}{n!}\left(\beta(np+1,1)\right)^{\frac{1}{p}}\left(s+1\right)^{-\frac{1}{q}}\left(\left|f^{(n)}(a)\right|^q+\left|f^{(n)}(b)\right|^q\right)^{\frac{1}{q}}
\end{eqnarray}
\end{thm}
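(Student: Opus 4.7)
The plan is to follow the blueprint of the proof of Theorem \ref{t6}, but to split H\"older's inequality differently: instead of writing $(1-t)^n = (1-t)^{n(1-1/q)}(1-t)^{n/q}$ as there, I will put the full $(1-t)^n$ (and analogously $t^n$) inside the $L^p$ factor, which is exactly what produces the $(np+1)^{-1/p}=\bigl(\beta(np+1,1)\bigr)^{1/p}$ prefactor visible on the right-hand side of (\ref{te9}).

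The first step is to derive a pre-H\"older estimate of the form
$$\left|(-1)^n\!\int_a^b\!f(x)\,dx + \sum_{m=1}^n (-1)^{n-m+2}\frac{(b-a)^m}{2\,m!}\bigl[f^{(m-1)}(b)-(-1)^m f^{(m-1)}(a)\bigr]\right| \le \frac{(b-a)^{n+1}}{2\,n!}\!\int_0^1\!\bigl[(1-t)^n+t^n\bigr]\bigl|f^{(n)}(ta+(1-t)b)\bigr|\,dt,$$
exactly as in the proof of Theorem \ref{t6}: apply Lemma \ref{l3} with $\lambda=a$ to obtain (\ref{t6b}) and with $\lambda=b$ to obtain (\ref{t6c}); add the two inequalities, and perform the substitution $x=ta+(1-t)b$ in each of the resulting integrals in order to recast them on $[0,1]$. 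Dividing the combined estimate by $2$ generates the factor $\frac{1}{2\,m!}$ appearing in the sum on the left-hand side of (\ref{te9}).

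Next, to each of the two integrals $\int_0^1(1-t)^n|f^{(n)}(ta+(1-t)b)|\,dt$ and $\int_0^1 t^n|f^{(n)}(ta+(1-t)b)|\,dt$ I apply H\"older's inequality in the form $\int_0^1 g\,h\,dt\le\bigl(\int_0^1 g^p dt\bigr)^{1/p}\bigl(\int_0^1 h^q dt\bigr)^{1/q}$ with $g(t)=(1-t)^n$ (respectively $t^n$) and $h(t)=|f^{(n)}(ta+(1-t)b)|$. Since $\int_0^1(1-t)^{np}dt=\int_0^1 t^{np}dt=\frac{1}{np+1}$, both integrals are dominated by $(np+1)^{-1/p}\bigl(\int_0^1|f^{(n)}(ta+(1-t)b)|^q dt\bigr)^{1/q}$. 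The $s$-convexity hypothesis on $|f^{(n)}|^q$ then yields $|f^{(n)}(ta+(1-t)b)|^q\le t^s|f^{(n)}(a)|^q+(1-t)^s|f^{(n)}(b)|^q$, and integrating with $\int_0^1 t^s dt=\int_0^1(1-t)^s dt=\frac{1}{s+1}$ gives $\int_0^1|f^{(n)}(ta+(1-t)b)|^q dt\le \frac{|f^{(n)}(a)|^q+|f^{(n)}(b)|^q}{s+1}$. Summing the two H\"older bounds (which produces a factor of $2$ that cancels the $\frac{1}{2}$ from the pre-H\"older step) and substituting into the estimate above delivers (\ref{te9}).

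The only real obstacle is bookkeeping: the analytic inputs are already contained in the proofs of Theorems \ref{t5} and \ref{t6}, and the present proof is essentially a recombination of them with the alternative H\"older split, so no new inequality or lemma is required beyond what the paper has already established.
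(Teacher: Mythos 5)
Your argument is correct, and the constants work out exactly: halving the symmetrized estimate (\ref{t6d})--(\ref{t6e}) accounts for the $\frac{(b-a)^m}{2m!}$ and the single factor $(-1)^n\int_a^b f$ on the left of (\ref{te9}); the H\"older split with the full powers $(1-t)^{np}$, $t^{np}$ in the $L^p$ factor gives $\bigl(\frac{1}{np+1}\bigr)^{1/p}=\bigl(\beta(np+1,1)\bigr)^{1/p}$; the $s$-convexity of $|f^{(n)}|^q$ gives the $(s+1)^{-1/q}$ factor; and the two equal bounds cancel the $\tfrac12$. Note that the paper states Theorem \ref{t9} without proof, so there is nothing to compare verbatim; your proof is precisely the intended recombination of the paper's own ingredients, namely the $\lambda=a$, $\lambda=b$ symmetrization of Lemma \ref{l3} used in Theorem \ref{t6} together with the H\"older--plus--$s$-convexity estimate of Theorem \ref{t5} (which there yields $(np+1)^{-1/p}(s+1)^{-1/q}$), so it is the natural, and in my view correct, way to fill the gap the authors left. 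The only caveat, inherited from the paper's own hypotheses rather than introduced by you, is that the H\"older step with exponent $p=\frac{q}{q-1}$ requires $q>1$; the boundary case $q=1$ stated in the theorem must be treated separately (or by letting $p\to\infty$, where $(np+1)^{-1/p}\to 1$).
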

\begin{thm}\label{t10}
Let $f: I \subset [0, \infty) \rightarrow \mathbb{R}$ be $n$-times differentiable function on $I^\circ$ such that $f^{(n)} \in L^1[a,b]$, where $a, b \in I$, $a < b$. If $|f^{(n)}|^q$ is $s$-concave  on $[a,b]$ for conjugate numbers $p, q$ where $q \geq 1$ with $p=\frac{q}{q-1}$, then
\begin{eqnarray}\label{te10}
&&\!\!\!\!\!\!\!\!\!\!\!\nonumber\left|\sum_{m=1}^n (-1)^{n-m+2} \frac{(b-a)^m}{2m!}\left[f^{(m-1)}(b)- (-1)^m f^{(m-1)}(a)\right]  + (-1)^n \int_{a}^b f(x)dx\right| \\&&
\indent\indent\indent\indent\indent\indent\indent\nonumber \leq \frac{(b-a)^{n+1}}{n!}\left(\beta(np+1,1)\right)^{\frac{1}{p}}2^{\frac{s-1}{q}}\left|f^{(n)}\left(\frac{a+b}{2}\right)\right| \\&&
\indent\indent\indent\indent\indent\indent\indent\indent= \frac{(b-a)^{n+1}}{n!}\left(\frac{1}{np+1}\right)^{\frac{1}{p}}\!\!2^{\frac{s-1}{q}}\left|f^{(n)}\left(\!\frac{a+b}{2}\!\right)\right|
\end{eqnarray}
\end{thm}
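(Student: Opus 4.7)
The plan is to mirror the argument of Theorem \ref{t6}, but replace the $s$-convexity step at the end by the $s$-concavity-via-Jensen argument used in Theorem \ref{t7}. Concretely, I would apply Lemma \ref{l3} separately at $\lambda = a$ and $\lambda = b$, producing the two inequalities (\ref{t6b}) and (\ref{t6c}); adding them through the triangle inequality and changing variables $x = ta + (1-t)b$ in the resulting integrals reproduces (\ref{t6e}) verbatim. Dividing that inequality by $2$ then accounts for the coefficient $(b-a)^m/(2\,m!)$ on the left-hand side of (\ref{te10}), and reduces the problem to bounding the two integrals $\int_0^1 (1-t)^n |f^{(n)}(ta+(1-t)b)|\,dt$ and $\int_0^1 t^n |f^{(n)}(ta+(1-t)b)|\,dt$.

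Each of these I would estimate with H\"older's inequality, placing the full weight on the first factor, so that the first H\"older factor becomes $\bigl(\int_0^1 (1-t)^{np}\,dt\bigr)^{1/p} = (np+1)^{-1/p} = \bigl(\beta(np+1,1)\bigr)^{1/p}$, which is precisely the constant appearing on the right-hand side of (\ref{te10}); the same value arises for the $t^n$ weight since $\int_0^1 t^{np}\,dt = \int_0^1 (1-t)^{np}\,dt$. This reduces the remaining task to controlling the $L^q$ integral $\int_0^1 |f^{(n)}(ta+(1-t)b)|^q\,dt$.

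For this final step I would invoke the $s$-concavity estimate (\ref{e7a}) from the proof of Theorem \ref{t7}, specialized to $\lambda = b$, namely
\[
\int_0^1 \bigl|f^{(n)}(ta+(1-t)b)\bigr|^q\,dt \leq 2^{s-1}\left|f^{(n)}\!\left(\frac{a+b}{2}\right)\right|^q,
\]
so that taking the $q$-th root contributes the factor $2^{(s-1)/q}\,|f^{(n)}((a+b)/2)|$. Assembling these estimates and substituting back into the inequality produced in Step~1 delivers (\ref{te10}). The proof is therefore essentially book-keeping; the only subtle point is tracking the factor of $2$ precisely, so that the two separate integrals (one contributed by $\lambda = a$ and one by $\lambda = b$) combine with the $\tfrac{1}{2}$ introduced when passing from (\ref{t6e}) to the left-hand side of (\ref{te10}) without leaving any spurious constant in the final bound.
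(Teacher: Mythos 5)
Your proposal is correct, and it is exactly the intended route: the paper states Theorem \ref{t10} without proof, and the natural argument is precisely your combination of the $\lambda=a$, $\lambda=b$ decomposition (\ref{t6e}) from Theorem \ref{t6} with H\"older's inequality (giving $(np+1)^{-1/p}=\bigl(\beta(np+1,1)\bigr)^{1/p}$) and the $s$-concavity/Jensen bound (\ref{e7a}) from Theorem \ref{t7}. The bookkeeping also checks out: the two integrals each contribute $(np+1)^{-1/p}2^{(s-1)/q}\bigl|f^{(n)}\bigl(\tfrac{a+b}{2}\bigr)\bigr|$, and the resulting factor $2$ cancels against the $\tfrac12$ coming from halving (\ref{t6e}), yielding (\ref{te10}) with no spurious constant.
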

\section{Applications to Composite Quadrature Rules}\label{Sec 3}
Let $K$ be the partition $\{a= x_0 < x_1 < ... < x_{n-1} < x_n = b\}$ of the interval $[a, b]$ and consider the quadrature formula
\begin{equation}\label{m1}
\int_a^b f(x)dx = S(f, K) + R(f, K)
\end{equation}
where
\begin{equation*}
S(f, K)=\sum_{m=0}^{n-1} f\left(\frac{x_m + x_{m+1}}{2}\right)\left(x_{m+1}-x_m\right)
\end{equation*}
for the midpoint version and $R(f,K)$ denotes the related approximation error.
\begin{equation*}
S(f, K)=\sum_{m=0}^{n-1} \frac{f(x_m)+f(x_{m+1})}{2}\left(x_{m+1}-x_m\right)
\end{equation*}
for the trapezoidal version and $R(f,K)$ denotes the related approximation error.
\begin{prop}\label{pro1}
Let $f: I \subseteq \mathbb{R} \rightarrow \mathbb{R}$ be a differentiable mapping on $I^o$ such  that $f' \in L^1[a, b]$, where $a, b \in I$ with $a < b$ and $|f'|$ is $s$-convex on $[a, b]$, then
\begin{eqnarray}\label{pr1}
&&\!\!\!\!\!\!\!\!\!\!\!\!\!\!\!\!\left|R(f, K)\right|\! \leq \!\frac{1}{(p+1)^{\frac{1}{p}}}\left(\!\frac{1}{s+1}\!\right)^{\frac{1}{q}}\!\sum_{m=0}^{n-1} \frac{\left(x_{m+1}-x_m\right)^3}{2}\left[\left|f'(x_m)\right|\!+\left|f'(x_{m+1})\right|\right]
\end{eqnarray}
\end{prop}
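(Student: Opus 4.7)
The plan is to apply Corollary \ref{cr1} (the midpoint-type inequality) on each subinterval $[x_m, x_{m+1}]$ of the partition $K$ and then assemble the resulting local error bounds via the triangle inequality. Since $|f'|$ is $s$-convex on $[a,b]$ and each $[x_m,x_{m+1}]\subseteq[a,b]$, the $s$-convexity hypothesis of Corollary \ref{cr1} is automatically inherited on every subinterval, so the corollary applies with $a$ replaced by $x_m$ and $b$ replaced by $x_{m+1}$.

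First I would invoke Corollary \ref{cr1} on $[x_m,x_{m+1}]$ and multiply both sides by $(x_{m+1}-x_m)$ to localize the bound in absolute (rather than averaged) form,
$$\left|f\!\left(\tfrac{x_m+x_{m+1}}{2}\right)(x_{m+1}-x_m) - \int_{x_m}^{x_{m+1}} f(x)\,dx\right| \leq \frac{(x_{m+1}-x_m)^3}{2(p+1)^{1/p}}\left(\frac{1}{s+1}\right)^{1/q}\bigl[|f'(x_m)|+|f'(x_{m+1})|\bigr].$$
Next, using additivity of the integral over the partition together with the definition of $S(f,K)$ in the midpoint version of \eqref{m1}, I would write
$$R(f,K) \;=\; \int_a^b f(x)\,dx - S(f,K) \;=\; \sum_{m=0}^{n-1}\left[\int_{x_m}^{x_{m+1}} f(x)\,dx \;-\; f\!\left(\tfrac{x_m+x_{m+1}}{2}\right)(x_{m+1}-x_m)\right].$$

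Finally I would apply the triangle inequality to the displayed expression for $R(f,K)$ and substitute the local bound from the first step into each summand, which directly yields \eqref{pr1} after pulling the constant factor $(p+1)^{-1/p}(s+1)^{-1/q}$ outside the sum. No genuine obstacle arises in this argument: the whole proof amounts to routine composite-rule bookkeeping layered on top of Corollary \ref{cr1}, and the only nontrivial point to verify is the inheritance of $s$-convexity to subintervals, which is immediate from Definition \eqref{e1}.
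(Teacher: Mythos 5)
Your proposal is correct and follows essentially the same route as the paper: apply Corollary \ref{cr1} (in its second, simplified form) on each subinterval $[x_m,x_{m+1}]$, write $R(f,K)$ as the sum of the local midpoint errors, and conclude by the triangle inequality. The only cosmetic difference is that you clear the factor $(x_{m+1}-x_m)$ before summing, whereas the paper factors it out inside the sum after applying the triangle inequality.
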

\begin{proof}
By applying subdivisions $[x_m, x_{m+1}]$ of the division $k$ for $m=0, 1, 2, ..., n-1$ on Corollary \ref{cr1}, we have \begin{eqnarray}\label{pre1}
&&\!\!\!\!\!\!\!\!\!\!\!\!\!\!\!\!\!\!\!\!\!\!\!\!\!\!\!\!\!\nonumber\left|\frac{1}{x_{m+1} - x_m} \int_{x_m}^{x_{m+1}}f(x)dx - f\left(\frac{x_{m+1}+x_m}{2}\right)\right| \\&&  \indent\indent\indent\indent\indent\leq\frac{(x_{m+1}-x_m)^2}{2(p+1)^{\frac{1}{p}}}\left(\frac{1}{s+1}\right)^{\frac{1}{q}}\left(\left|f'(x_{m+1})\right|+\left|f'(x_m)\right|\right)
\end{eqnarray}
Taking sum over $m$ from $0$ to $n-1$ and taking into account that $|f'|^q$ is $s$-convex, we get
\begin{eqnarray}\label{pre2}
&&\!\!\!\!\!\!\!\!\!\!\!\!\!\!\!\nonumber\left|\int_a^b \!\!f(x)dx - \!S(f, K)\right|  =\left|\!\sum_{m=0}^{n-1}\left\{\!\!\int_{x_m}^{x_{m+1}}f(x)dx\! - \! f\left(\!\frac{x_{m+1}+x_m}{2}\right)\!\left(x_{m+1}-x_m\right)\right\}\right|\\&&\indent\indent\indent\indent\indent\indent\nonumber \leq \!\sum_{m=0}^{n-1}\left|\left\{\!\!\int_{x_m}^{x_{m+1}}f(x)dx\! - \!\!\left(x_{m+1}-x_m\right) f\left(\!\frac{x_{m+1}+x_m}{2}\right)\right\}\right|
\\&& \indent\indent\indent\indent\indent\indent\nonumber  =  \!\sum_{m=0}^{n-1}\left(x_{m+1}-x_m\right)\left|\left\{\!\frac{1}{\left(x_{m+1}-x_m\right)}\!\int_{x_m}^{x_{m+1}}f(x)dx \right.\right.\\&& \indent\indent\indent\indent\indent\indent\indent\indent\indent\indent\indent\indent\indent\indent\indent \left.\left. - f\left(\!\frac{x_{m+1}+x_m}{2}\right)\right\}\right|
\end{eqnarray}
By combining (\ref{pre1}) and (\ref{pre2}), we get (\ref{pr1}). Which completes the proof.
\end{proof}
\begin{prop}\label{pro2}
Let $f: I \subseteq \mathbb{R} \rightarrow \mathbb{R}$ be a twice differentiable mapping on $I^o$ such  that $f'' \in L^1[a, b]$, where $a, b \in I$ with $a < b$ and $|f''|$ is $s$-convex on $[a, b]$, then
\begin{eqnarray}
&&\!\!\!\!\!\!\!\!\!\!\!\!\!\!\nonumber\left|R(f, K)\right| \leq \frac{1}{(6)^{\frac{1}{p}}}\left(\!\frac{1}{(s+2)(s+3)}\!\right)^{\frac{1}{q}}\!\sum_{m=0}^{n-1} \frac{\left(x_{m+1}-x_m\right)^3}{2}\left[\left|f''(x_m)\right|^q+\left|f''(x_{m+1})\right|^q\right]^{\frac{1}{q}} \\&&
\indent \leq \!\frac{1}{(6)^{\frac{1}{p}}}\left(\!\frac{1}{(s+2)(s+3)}\!\right)^{\frac{1}{q}}\!\sum_{m=0}^{n-1} \frac{\left(x_{m+1}-x_m\right)^3}{2}\left[\left|f''(x_m)\right| +\left|f''(x_{m+1})\right|\right]
\end{eqnarray}
\end{prop}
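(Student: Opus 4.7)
The plan is to mirror the argument of Proposition \ref{pro1}: first establish the natural trapezoidal-type single-interval inequality corresponding to Corollary \ref{cr2} in the $s$-convex setting, and then apply it to each subinterval of the partition $K$ and sum. Since Corollary \ref{cr2} only covers the classical case $s=1$, I would first produce the generalisation for arbitrary $s\in(0,1]$ and then carry out the summation exactly as in the passage (\ref{pre2}).

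For the single-interval step I would start from Lemma \ref{l1}, which writes the trapezoidal deviation on a generic interval $[c,d]$ as
\begin{equation*}
\frac{f(c)+f(d)}{2}-\frac{1}{d-c}\int_c^d f(x)\,dx = \frac{(d-c)^2}{2}\int_0^1 t(1-t)\,f''(tc+(1-t)d)\,dt.
\end{equation*}
Taking absolute values and applying H\"older's inequality with the symmetric split $t(1-t)=[t(1-t)]^{1/p}[t(1-t)]^{1/q}$ produces the leading factor $\bigl(\int_0^1 t(1-t)\,dt\bigr)^{1/p} = (1/6)^{1/p}$ and leaves the inner integral $\int_0^1 t(1-t)\,|f''(tc+(1-t)d)|^q\,dt$. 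Assuming $|f''|^q$ is $s$-convex on $[c,d]$ (the condition implicit in the computation, matching the hypotheses of Theorem \ref{t5} and Proposition \ref{pro1}), the integrand is dominated by $t(1-t)\bigl[t^s|f''(c)|^q+(1-t)^s|f''(d)|^q\bigr]$, and the resulting Beta integrals both evaluate to $\beta(s+2,2)=\frac{1}{(s+2)(s+3)}$. After multiplying through by $d-c$, this yields a single-interval inequality that collapses to Corollary \ref{cr2} when $s=1$.

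Next I would apply this bound on each subinterval $[x_m,x_{m+1}]$, sum over $m=0,\dots,n-1$, and use the triangle inequality on $|R(f,K)|=\bigl|\int_a^b f(x)\,dx-S(f,K)\bigr|$, following the same organisation used to pass from (\ref{pre1}) to (\ref{pre2}) in the proof of Proposition \ref{pro1}. This delivers the first inequality in the statement. The second inequality then follows term-by-term from the elementary estimate $(\alpha^q+\beta^q)^{1/q}\leq \alpha+\beta$ for $\alpha,\beta\geq 0$ and $q\geq 1$, i.e.\ the same device invoked at the end of Corollary \ref{cr1}. The only real obstacle is a minor interpretive one: the proposition hypothesises $s$-convexity of $|f''|$, whereas the computation actually requires $s$-convexity of $|f''|^q$; once that is understood as the intended hypothesis, everything that remains is a routine Beta-function evaluation and Hölder bookkeeping.
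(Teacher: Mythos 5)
Your proposal is correct, and it reproduces exactly the constants in the statement: the symmetric H\"older split of the kernel $t(1-t)$ gives $\bigl(\int_0^1 t(1-t)\,dt\bigr)^{1/p}=(1/6)^{1/p}$, and the $s$-convexity of $|f''|^q$ turns the inner integral into $\beta(s+2,2)\,|f''(c)|^q+\beta(2,s+2)\,|f''(d)|^q$ with $\beta(s+2,2)=\frac{1}{(s+2)(s+3)}$, after which the subinterval-by-subinterval summation and the elementary bound $(\alpha^q+\beta^q)^{1/q}\le\alpha+\beta$ finish the argument. Structurally this is the same plan as the paper's (one-line) proof, which says to repeat the argument of Proposition \ref{pro1} using Corollary \ref{cr2} on each subinterval; the difference is that you derive the single-interval trapezoidal bound directly from Lemma \ref{l1} for general $s\in(0,1]$, whereas the paper cites Corollary \ref{cr2}, which is stated only for $s=1$ (where $(s+2)(s+3)=12$). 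Your route is therefore the more complete one: it is what actually justifies the general-$s$ constant $\frac{1}{(s+2)(s+3)}$ appearing in the proposition, and it sidesteps the fact that the chain Theorem \ref{t6} $\to$ Corollary \ref{cr2} does not literally cover arbitrary $s$. You also correctly note (and resolve in the same way the paper implicitly does in Proposition \ref{pro1}) that the hypothesis should be read as $s$-convexity of $|f''|^q$ rather than of $|f''|$; with that reading there is no gap in your argument.
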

\begin{proof}
Proof is very similar as that of Proposition \ref{pro1}  by using corollary \ref{cr2}.
\end{proof}
\section{Applications to Some Special Means}\label{Sec 4}
Let us recall the following means for any two positive numbers $a$ and $b$.
\begin{enumerate}
  \item  \textit{The Arithmetic mean}
  $$A\equiv A(a,b)=\frac{a+b}{2}$$
  \item \textit{The Harmonic mean}
  $$H\equiv H(a,b)=\frac{2ab}{a+b}  $$
  \item \textit{The $p-$Logarithmic mean}\\
  $L_p\equiv L_{p}(a,b)=\left\{
                           \begin{array}{ll}
                             a, & \hbox{if $a=b$;}   \\
                             \left[\frac{b^{p+1}-a^{p+1}}{(p+1)(b-a)}\right]^\frac{1}{p}, & \hbox{if $a\neq b$.}
                           \end{array}
                         \right.$
  \item The $Identric\ \ mean$\\
  $I\equiv I(a,b)=\left\{
                           \begin{array}{ll}
                             a, & \hbox{if $a=b$;}  \\
                             \frac{1}{e}\left(\frac{b^b}{a^a}\right)^\frac{1}{b-a}, & \hbox{if $a\neq b$.}
                           \end{array}
                         \right.$
 \item \textit{The Logarithmic mean}\\
  $L\equiv L(a,b)=\left\{
                           \begin{array}{ll}
                             a, & \hbox{if $a=b$;}   \\
                             \frac{b-a}{\ln b\ -\ \ln a}, & \hbox{if $a\neq b$.}
                           \end{array}
                         \right.$
\end{enumerate}
The following inequality is well known in the literature in ~\cite{r9}:
$$H\leq G \leq L\leq I\leq A$$
It is also known that $L_p$ is  monotonically increasing over $p\in\mathbb{R},$ denoting $L_0=I$ and $L_{-1}=L$.\\
Now Here we find some new applications for special means of real numbers by using the results of Section 2.
\begin{prop}\label{P1}
Let $p>1,$ $0<a<b$ and $q=\frac{p}{p-1}$. Then one has the inequality.
\begin{equation}\label{S1}
\left| A(a,b)\ - \ L(a,b)\right| \leq \frac{(b-a)^2}{3} A \left(|a|, |b|\right)
\end{equation}
\end{prop}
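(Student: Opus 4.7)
The plan is to apply Corollary \ref{cr1} to the convex function $f(x)=1/x$ on $[a,b]$. This choice is dictated by the two identities
\begin{equation*}
f\!\left(\frac{a+b}{2}\right)=\frac{2}{a+b}=\frac{1}{A(a,b)},\qquad \frac{1}{b-a}\int_a^b\frac{dx}{x}=\frac{\ln b-\ln a}{b-a}=\frac{1}{L(a,b)},
\end{equation*}
so the midpoint-type left-hand side of Corollary \ref{cr1} already carries a difference of the two target means (through their reciprocals). Since $|f'(x)|=1/x^{2}$ is convex on $(0,\infty)$, we may invoke the corollary with $s=1$.

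Substituting $|f'(a)|+|f'(b)|=(a^{2}+b^{2})/(a^{2}b^{2})$ into the second (simpler) bound of Corollary \ref{cr1} with conjugate exponents $p$ and $q$ yields
\begin{equation*}
\left|\frac{1}{A(a,b)}-\frac{1}{L(a,b)}\right|\leq\frac{(b-a)^{2}}{2(p+1)^{1/p}}\left(\frac{1}{2}\right)^{1/q}\cdot\frac{a^{2}+b^{2}}{a^{2}b^{2}}.
\end{equation*}
Because $H\leq L\leq A$ for $0<a<b$, the left side equals $(A-L)/(AL)$, so multiplying through by $AL$ converts the statement into an explicit upper bound on $|A(a,b)-L(a,b)|$.

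The final step is the reduction of this upper bound to the stated form $\frac{(b-a)^{2}}{3}A(|a|,|b|)=\frac{(b-a)^{2}(a+b)}{6}$. I would choose $p=q=2$ so that the prefactor becomes $1/(2\sqrt{6})$, bound $AL\leq A^{2}=(a+b)^{2}/4$ using $L\leq A$, and then consolidate the constants. The principal obstacle lies precisely here: the raw factor $(a^{2}+b^{2})/(a^{2}b^{2})$ scales like $1/a^{2}$ as $a$ shrinks, while $A(|a|,|b|)$ is only of order $(a+b)$, so the stated inequality requires absorbing the mismatch using the $AL$ prefactor. If the direct consolidation is uncomfortable, an equivalent route is Proposition \ref{pro1} applied to $f(x)=1/x$ on the trivial partition $K=\{a,b\}$, which produces the same estimate in a single line and may allow a cleaner collection of constants into the final factor $1/3$.
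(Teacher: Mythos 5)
Your reduction works only up to the last step, and that last step is not merely ``uncomfortable'' --- it fails. After multiplying by $AL$ your estimate reads
\begin{equation*}
A(a,b)-L(a,b)\;\le\;\frac{(b-a)^{2}}{2(p+1)^{1/p}}\left(\tfrac12\right)^{1/q} A(a,b)\,L(a,b)\left(\frac{1}{a^{2}}+\frac{1}{b^{2}}\right),
\end{equation*}
and the factor $A\,L\,(a^{-2}+b^{-2})$ is \emph{not} bounded by any constant multiple of $A(|a|,|b|)$: fix $b=1$ and let $a\to0^{+}$; then $A\to\tfrac12$, $L=\frac{1-a}{\ln(1/a)}\to0$ only logarithmically, so the factor grows like $\bigl(2a^{2}\ln(1/a)\bigr)^{-1}\to\infty$ while $A(|a|,|b|)$ stays bounded. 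So no choice of $p,q$ and no use of $L\le A$ can consolidate the constants into $\tfrac13\,A(|a|,|b|)$. The fallback you suggest, Proposition \ref{pro1} with the trivial partition $K=\{a,b\}$, is literally Corollary \ref{cr1} again (that proposition is obtained by summing \ref{cr1} over subintervals), so it reproduces the same bound and cannot repair the mismatch. The structural reason is your choice $f(x)=1/x$: any estimate built on it carries the weights $|f'(a)|,|f'(b)|\sim a^{-2},b^{-2}$ and therefore ends with harmonic/reciprocal-type quantities on the right, as in Propositions \ref{P2} and \ref{P3} of the paper (which keep an $\mathrm{H}^{-1}$ factor), never with the arithmetic mean $A(|a|,|b|)$.

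The paper takes a different route precisely to avoid this: it applies the trapezoid-type result, Theorem \ref{t6} with $s=1$ (i.e.\ Corollary \ref{cr2}), to $f(x)=e^{x}$. On the exponential scale, with endpoints $\ln a$ and $\ln b$, one has $\frac{e^{\ln a}+e^{\ln b}}{2}=A(a,b)$ and $\frac{1}{\ln b-\ln a}\int_{\ln a}^{\ln b}e^{x}\,dx=L(a,b)$, so the difference $A-L$ appears directly (not through reciprocals), and the second-derivative weights are $e^{\ln a}=a$ and $e^{\ln b}=b$, i.e.\ exactly a quantity of the size of $A(|a|,|b|)$, which is what the stated right-hand side requires. (The paper's one-line proof is itself cavalier --- on that scale the square factor comes out as $(\ln b-\ln a)^{2}$ rather than $(b-a)^{2}$ --- but the essential point for your attempt is that the choice of test function dictates the endpoint weights, and $1/x$ gives weights that cannot be matched to $A(|a|,|b|)$, so your argument as proposed cannot be completed.)
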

\begin{proof} By Theorem \ref{t6} applied for the mapping $f(x)=e^x$ for $s=1$ we have the above inequality (\ref{S1}).
\end{proof}\\
A result which is connected with Geometric, Identric and Harmonic mean is the following one:
\begin{prop}\label{P2}
Let $p>1,$ $0<a<b$ and $q=\frac{p}{p-1},$ then
\begin{equation*}
\left|\frac{\mathrm{G}(a, b)}{\mathrm{I}(a, b)}\right| \leq \exp \left[-\frac{(b-a)^2}{2}\left(\frac{2}{p+1}\right)^2 \,\,\,\mathrm{H}^{-1}\left(a,b\right)\right]
\end{equation*}
\end{prop}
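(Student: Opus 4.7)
The plan is to specialize one of the Hermite--Hadamard-type bounds from Section~\ref{Sec 2} to the test function $f(x)=\ln x$ on $[a,b]\subset(0,\infty)$. The reason this is the right choice is that the logarithm converts the three means in the statement into linear quantities: $\frac{1}{b-a}\int_a^b\ln x\,dx=\ln I(a,b)$, $\frac{\ln a+\ln b}{2}=\ln G(a,b)$, and $|f'(a)|+|f'(b)|=\frac{1}{a}+\frac{1}{b}=2H^{-1}(a,b)$. Since $|f'(x)|=1/x$ is convex on $(0,\infty)$, it is $s$-convex in the second sense with $s=1$, so the hypotheses of the first-derivative bounds with $n=1$ are satisfied.

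Because the target ratio is $G/I$ (not $A/I$), a trapezoidal-type inequality is the right vehicle: it is precisely the quantity $\frac{f(a)+f(b)}{2}-\frac{1}{b-a}\int_a^b f\,dx$ that equals $\ln G-\ln I$ for $f=\ln$. I would therefore specialize Theorem~\ref{t9} with $n=1$ and $s=1$, which places $(|f'(a)|^q+|f'(b)|^q)^{1/q}$ on the right-hand side. After substituting $f=\ln$, applying the identity $\frac{1}{a}+\frac{1}{b}=2H^{-1}(a,b)$, and using $2^{1-1/q}=2^{1/p}$, the right-hand side should consolidate into a constant multiple of $(b-a)^2\,H^{-1}(a,b)$. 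Exponentiating the resulting inequality $|\ln G-\ln I|\le X$ then converts the logarithmic gap into the ratio $G/I$ and produces the form displayed in the proposition.

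The main obstacle is matching the announced constant and the direction of the inequality. A direct computation from Theorem~\ref{t9} yields a factor of the shape $(\tfrac{2}{p+1})^{1/p}$ rather than the $(\tfrac{2}{p+1})^{2}$ written in the statement, so either the proposition contains a typographical slip or one needs to pass through an intermediate power-mean estimate on $|f'(a)|^q+|f'(b)|^q$ to recover the stated exponent. Even more subtle is the sign: since $G(a,b)\le L(a,b)\le I(a,b)$, the Hermite--Hadamard gap $\ln I-\ln G$ is nonnegative, and the theorems of Section~\ref{Sec 2} bound it from \emph{above}, so exponentiation naturally delivers $G/I\ge e^{-X}$, i.e.\ a \emph{lower} bound on the ratio, whereas the proposition writes an upper bound. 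Reconciling these two points -- the exact exponent $(\tfrac{2}{p+1})^2$ and the direction of the final inequality -- is where the real work lies; once settled, the remainder is routine substitution of $f(x)=\ln x$ into the already-established machinery, followed by exponentiation.
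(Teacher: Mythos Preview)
Your diagnosis of the ingredients is sound: with $f=\pm\ln$ one has $\tfrac{1}{b-a}\int_a^b\ln x\,dx=\ln I$, $\tfrac{\ln a+\ln b}{2}=\ln G$, and $|f'(a)|+|f'(b)|=2H^{-1}(a,b)$, so a \emph{trapezoidal}-type gap is indeed what produces the ratio $G/I$. However, the paper does \emph{not} invoke Theorem~\ref{t9}; its one-line proof reads ``Follows by Theorem~\ref{t5}, setting $f(x)=-\ln(x)$ for $n=1$ and $s=1$.'' At $n=1$ Theorem~\ref{t5} is the general-$\lambda$ midpoint-type estimate (cf.\ Corollary~\ref{cr1}), and with the natural choice $\lambda=\tfrac{a+b}{2}$ it controls $\bigl|f(\tfrac{a+b}{2})-\tfrac{1}{b-a}\int_a^b f\bigr|=\bigl|\ln A-\ln I\bigr|$, i.e.\ a bound for $A/I$ rather than $G/I$. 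So your route via Theorem~\ref{t9} is genuinely different from the paper's stated route via Theorem~\ref{t5}; in fact your choice is the one that matches the left-hand side of the proposition without further manipulation, whereas the paper's attribution, taken literally, does not directly land on $G/I$.

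Your two reservations are also well founded and apply to either route. First, substituting $f=-\ln$ into Theorem~\ref{t5} (or $f=\ln$ into Theorem~\ref{t9}) with $n=1$, $s=1$ yields a prefactor of the form $(p+1)^{-1/p}$ on the right, not $\bigl(\tfrac{2}{p+1}\bigr)^{2}$; the displayed exponent $2$ in the proposition does not arise from any of the Section~\ref{Sec 2} bounds and appears to be a misprint. Second, since $G\le I$, the Section~\ref{Sec 2} estimates give $\ln I-\ln G\le X$, which after exponentiation is $G/I\ge e^{-X}$, a \emph{lower} bound on the ratio; the inequality sign in the proposition is therefore oriented the wrong way (the written upper bound $|G/I|\le e^{-X}<1$ is in any case trivially incompatible with $G/I>0$ approaching $1$ as $a\to b$). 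In short, your plan is the correct mechanism; the discrepancies you flag are defects of the stated proposition, not of your argument, and they are not resolved by the paper's own one-line proof either.
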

\begin{proof} Follows by Theorem \ref{t5}, setting $f(x)=- \ln(x)$ for $n=1$ and $s=1.$
\end{proof}\\
More results which are connected with $p-$Logarithmic mean $L_{p}(a,b)$ is the following one:
\begin{prop}\label{P3}
Let $p>1,$ $0<a<b$ and $q=\frac{p}{p-1},$ then
\begin{equation*}
\left|\mathrm{A}^{\frac{1}{2}}(a, b)-\mathrm{L}_p^2(a, b)\right| \leq  \frac{(b-a)^2}{2(p+1)^{\frac{1}{p}}}\left(\frac{1}{2}\right)^{\frac{1}{q}} \,\,\,\mathrm{H}^{-1}\left(a^{\frac{1}{2}},b^{\frac{1}{2}}\right)
\end{equation*}
\end{prop}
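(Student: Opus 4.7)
The plan is to apply Corollary \ref{cr1} (the midpoint-type inequality with $n=1$, $\lambda=\tfrac{a+b}{2}$) to the function $f(x)=\sqrt{x}$ on $[a,b]$ with $s=1$, in the same specialization spirit as Propositions \ref{P1} and \ref{P2}, which fit $e^x$ and $-\ln x$ into the general Section \ref{Sec 2} machinery. Before invoking the corollary I would check its hypothesis: with $f(x)=x^{1/2}$ one has $f'(x)=\tfrac{1}{2}x^{-1/2}$, so $|f'(x)|^q = 2^{-q}x^{-q/2}$, and since $-q/2<0$ and $a>0$ the map $x\mapsto x^{-q/2}$ is a convex power function on $[a,b]$; hence $|f'|^q$ is $1$-convex, exactly as required by the corollary with $s=1$.

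Next I would read off the three mean-theoretic quantities appearing on the right of Corollary \ref{cr1}. The midpoint value is $f\bigl(\tfrac{a+b}{2}\bigr)=\sqrt{A(a,b)}=A^{1/2}(a,b)$. The integral average is $\tfrac{1}{b-a}\int_a^b\sqrt{x}\,dx=\tfrac{2}{3}\cdot\tfrac{b^{3/2}-a^{3/2}}{b-a}=L_{1/2}^{1/2}(a,b)$, i.e.\ the $p$-logarithmic mean with $p=\tfrac12$ raised to the $\tfrac12$ power; this is how I read the symbol $L_p^2(a,b)$ on the left-hand side of (\ref{S1}). Finally, $|f'(a)|+|f'(b)|=\tfrac{1}{2}\bigl(a^{-1/2}+b^{-1/2}\bigr)=\tfrac{\sqrt a+\sqrt b}{2\sqrt{ab}}=H^{-1}\bigl(\sqrt a,\sqrt b\bigr)$, using the identity $H(x,y)=\tfrac{2xy}{x+y}$.

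Substituting these three identifications into the second (looser) line of Corollary \ref{cr1}, namely $\tfrac{(b-a)^2}{2(p+1)^{1/p}}\bigl(\tfrac{1}{s+1}\bigr)^{1/q}(|f'(a)|+|f'(b)|)$ evaluated at $s=1$, produces exactly the right-hand side of (\ref{S1}). No genuine analytic obstacle arises: the only step requiring a moment of care is the convexity verification for $|f'|^q$, which follows immediately from the sign of the exponent $-q/2$; the remainder of the argument is a clean algebraic identification of the arithmetic, half-logarithmic, and harmonic means inside the general inequality of Corollary \ref{cr1}.
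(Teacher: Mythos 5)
Your proposal is correct and follows exactly the paper's route: the paper's proof is the single line ``follows by Corollary \ref{cr1} of Theorem \ref{t5} with $f(x)=\sqrt{x}$, $n=1$, $s=1$,'' and you have simply filled in the convexity check for $|f'|^q$ and the identification of the midpoint, integral average, and endpoint-derivative terms with $A^{1/2}(a,b)$, $L_{1/2}^{1/2}(a,b)$, and $H^{-1}(\sqrt{a},\sqrt{b})$. Your reading of the paper's (typographically garbled) symbol $L_p^2(a,b)$ as $L_{1/2}^{1/2}(a,b)$ is the right interpretation of what the specialization actually yields.
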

\begin{proof} Follows by of Corollary \ref{cr1} of Theorem \ref{t5}, setting $f(x)=\sqrt{x}, x\geq 0$ for $n=1$ and $s=1.$
\end{proof}
\begin{prop}\label{P4}
Let $p>1,$ $0<a<b$ and $q=\frac{p}{p-1}$, then
\begin{eqnarray*}
&&\!\!\!\!\!\!\!\! \left|A\left[(1-a)^n, (1-b)^n\right] - L_n^n\left[(1-a), (1-b)\right]\right| \\&& \leq \frac{(b-a)^2}{12^{\frac{q-1}{q}}}\left(\frac{n(n-1)}{(s+2)(s+3)}\right)^{\frac{1}{q}} \left(A^{1/q}\left[|1-a|^{q(n-1)},|1-b|^{q(n-1)}\right]\right)
\\&& \leq \frac{(b-a)^2}{12^{\frac{q-1}{q}}}\left(\frac{n(n-1)}{(s+2)(s+3)}\right)^{\frac{1}{q}} \left(A\left[|1-a|^{(n-1)},|1-b|^{(n-1)}\right]\right)
\end{eqnarray*}
\end{prop}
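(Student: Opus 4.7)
The idea is to apply the trapezoidal-type estimate of Proposition \ref{pro2} with the trivial one-subinterval partition $K=\{a,b\}$ to the function $f(x)=(1-x)^n$ on $[a,b]$. A one-line antiderivative gives
\[
\frac{1}{b-a}\int_a^b (1-x)^n\,dx \;=\; \frac{(1-a)^{n+1}-(1-b)^{n+1}}{(n+1)(b-a)} \;=\; L_n^n(1-a,1-b),
\]
while clearly $\tfrac{f(a)+f(b)}{2}=A[(1-a)^n,(1-b)^n]$. Thus the trapezoidal residue $R(f,K)=\int_a^b f-\tfrac{f(a)+f(b)}{2}(b-a)$ is exactly $(b-a)\bigl(L_n^n(1-a,1-b)-A[(1-a)^n,(1-b)^n]\bigr)$, so bounding $|R(f,K)|$ via Proposition \ref{pro2} controls precisely the target quantity, up to the factor $b-a$.

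Next, I would differentiate twice: $f''(x)=n(n-1)(1-x)^{n-2}$, giving $|f''(x)|^q=[n(n-1)]^q|1-x|^{q(n-2)}$, and substitute into the right-hand side of Proposition \ref{pro2}. Writing $|1-a|^{q(n-2)}+|1-b|^{q(n-2)}=2A\bigl[|1-a|^{q(n-2)},|1-b|^{q(n-2)}\bigr]$ lets the resulting $2^{1/q}$ combine with the prefactor $\tfrac{1}{2\cdot 6^{1/p}}$ to produce $\tfrac{1}{12^{(q-1)/q}}$, via the identity $2\cdot 6^{1/p}=12/12^{1/q}$. Pulling the constant $n(n-1)$ inside the $q$-th root then yields the first line of the proposition, up to reconciling the exponent of $|1-x|$ with the form in which the proposition states it. For the second inequality, I would apply the elementary $\ell^q$-to-$\ell^1$ bound $(X^q+Y^q)^{1/q}\le X+Y$ valid for $X,Y\ge 0$ and $q\ge 1$, with $X=|1-a|^{n-1}$, $Y=|1-b|^{n-1}$, which replaces $A^{1/q}[\,\cdot^{q}\,]$ by $A[\,\cdot\,]$ and delivers the claimed chain.

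The step I expect to require the most care is the verification of the hypothesis of Proposition \ref{pro2}, namely the $s$-convexity of $|f''|^q$ on $[a,b]$ for our choice of $f$. Restricting to $0<a<b<1$ makes $1-x>0$ throughout $[a,b]$; for $n\ge 2$ the function $x\mapsto(1-x)^{q(n-2)}$ is then nonnegative and convex (its second derivative is $q(n-2)[q(n-2)-1](1-x)^{q(n-2)-2}\ge 0$ whenever $q(n-2)\ge 1$, with the remaining degenerate exponents trivial), and a standard observation from \cite{r6} asserts that any nonnegative convex function on $\mathbb{R}^+$ is $s$-convex in the second sense for every $s\in(0,1]$. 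Once this hypothesis is legitimized, the remainder of the argument is essentially bookkeeping: substitute, regroup, and invoke the $\ell^q$-to-$\ell^1$ inequality to pass from the first to the second estimate.
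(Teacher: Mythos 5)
Your route---applying the trapezoid-type estimate of Proposition \ref{pro2} with the single subinterval $[a,b]$ (i.e.\ the $n=2$ instance of the machinery behind Theorem \ref{t6}) to $f(x)=(1-x)^n$---is essentially the same circle of ideas as the paper, whose entire proof is the citation ``follows by Theorem \ref{t6} with $f(x)=(1-x)^n$''; if anything your choice matches the stated constants better, since the factor $(s+2)(s+3)$ occurs only in Proposition \ref{pro2}. Your identification of the left-hand side, the constant bookkeeping $2^{1/q}\big/\bigl(2\cdot 6^{1/p}\bigr)=12^{-1/p}=12^{-(q-1)/q}$ (the intermediate ``identity'' $2\cdot 6^{1/p}=12/12^{1/q}$ you quote is false as written, but the combined computation is right), and the verification that $|f''|^q$ is $s$-convex via ``nonnegative convex $\Rightarrow$ $s$-convex in the second sense'' are all sound, though the latter only covers $n\ge 2$ and $[a,b]\subset(0,1)$, whereas the proposition is asserted for all $0<a<b$ and integer $|n|\ge 2$.

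Two steps you defer or assert, however, do not go through. First, with $f''(x)=n(n-1)(1-x)^{n-2}$ your substitution yields $n(n-1)\,A^{1/q}\bigl[|1-a|^{q(n-2)},|1-b|^{q(n-2)}\bigr]$, i.e.\ the factor $n(n-1)$ outside the $q$-th root and exponent $q(n-2)$; this cannot be ``reconciled'' with the displayed $\bigl(n(n-1)\bigr)^{1/q}$ and $q(n-1)$ --- the mismatch sits in the statement itself (which looks like a conflation of the first- and second-derivative estimates), so what you obtain is a corrected variant rather than the inequality as printed, and the deferral hides this. Second, the passage from the first to the second displayed line does not follow from $(X^q+Y^q)^{1/q}\le X+Y$: that bound gives $A^{1/q}[X^q,Y^q]=\bigl(\tfrac{X^q+Y^q}{2}\bigr)^{1/q}\le \tfrac{X+Y}{2^{1/q}}=2^{1/p}A[X,Y]$, which is weaker by the factor $2^{1/p}$, and in fact the power-mean inequality gives $A^{1/q}[X^q,Y^q]\ge A[X,Y]$ for $q\ge 1$, so the second inequality of the chain is reversed unless the prefactor is enlarged (compare Corollary \ref{cr1}, where the paper doubles the constant when dropping the $q$-th roots). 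You should either carry the extra $2^{1/p}$ explicitly or state that the chain as printed cannot be obtained by this reduction.
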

\begin{proof} Follows by Theorem \ref{t6}, setting $f(x)=(1-x)^n$, $|n| \geq 2$ and $n \in \mathbb{Z}$.
\end{proof}
\section{Conclusion}\label{sec4}
Here we can further find some new relations in the same way as above associating with some special means by taking some other convex functions. For example choosing different convex functions like $f(x)= -\ln x$, $f(x)=\frac{1}{x}$ and $f(x)=-\ln (1-x)$ for different values of $s$ in $s$-convexity(concavity), we get new relations relating to to some special means.

\end{document}